\tikzset{big arrow/.style={
		-Stealth,line cap=round,line width=1mm,
		shorten <=1mm,shorten >=1mm}}
\theoremstyle{plain}
\theoremstyle{definition}
\newtheorem{thm}{Theorem}
\newtheorem{cor}{Corollary}
\newtheorem{prop}{Proposition}
\newtheorem{lem}{Lemma}
\newtheorem{remark}{Remark}
\begin{document}
\title{The maximal subsemigroups of the ideals on a monoid of partial injections}
\author[1*]{Apatsara Sareeto}
\author[2]{Jörg Koppitz}

\affil[1*]{Corresponding author. Institue of Mathematics, University of Potsdam, Potsdam, 14476, Germany  (E-mail: channypooii@gmail.com)}
\affil[2]{Institute of Mathematics and Informatics, Bulgarian Academy of Sciences, Sofia, 1113, Bulgaria  (E-mail: koppitz@math.bas.bg)}

\maketitle 
\begin{abstract} In the present paper, a submonoid of the well studied monoid $POI_n$ of all order-preserving partial injections on an $n$-element chain is studied. The set $IOF_n^{par}$ of all partial transformations in $POI_n$ which are fence-preserving as well as parity-preserving form a submonoid of $POI_n$. We describe the Green's relations and ideals of $IOF_n^{par}$. For each ideal of $IOF_n^{par}$, we characterize the maximal subsemigroups. We will observe that there are three different types of maximal subsemigroups.

\vskip1em \noindent \textbf{2020 Mathematics Subject Classification: 20M12,  20M18, 20M20}

\vskip1em \noindent \textbf{Keywords: Symmetric inverse monoid, Order-preserving,  Maximal subsemigroups, Ideals, Green's relations}
\end{abstract}

\section{Introduction and Preliminaries} 
Let $\overline{n}$ be a finite chain with $n$ elements (where $n$ is a positive integer), denoted as $\overline{n} = \{1 < 2 < \cdots < n\}$. We denote by $PT_n$ the monoid (under composition) of all partial transformations on $\overline{n}$. A partial transformation $\alpha$ on the set $\overline{n}$ is a mapping from a subset $A$ of $\overline{n}$ into $\overline{n}$. The domain(\mbox{respectively}, image) of $\alpha$ is denoted by $dom(\alpha) (\mbox{respectively}, \ im(\alpha))$. The empty transformation is symbolized as $\varepsilon$, and it is the transformation with $dom(\varepsilon) = im(\varepsilon) = \emptyset$. Let $Id_{\overline{n}}$ be the set of all partial identities on $\overline{n}$, where $id$ is the identity mapping on $\overline{n}$. A transformation $\alpha\in PT_n$ is called order-preserving if $x<y$ implies $x\alpha<y\alpha$ for all $x,y\in dom(\alpha)$. An injective $\alpha\in PT_n$ is called partial injection. The set $I_n$ (under composition) of all partial injections on $\overline{n}$ forms a monoid, referred to as a symmetric inverse semigroup, which was introduced by Wagner \cite{wagner}. We denoted by $POI_n$ the monoid of all partial order-preserving injections on $\overline{n}$. \\
\indent Recall, a subsemigroup $T$ of a semigroup $S$ is called maximal subsemigroup of $S$ if $T$ is contained  in no other proper subsemigroup of $S$. A left ideal of $S$ is a subset $I$ of $S$ such that $SI= \{sx :
s \in S, x \in I \} \subseteq I$. A right ideal is defined analogously, and an ideal of $S$ is a subset of $S$ that is both a left ideal and a right ideal. For more general background on semigroups and standard notations, we refer the reader to \cite{clif, Howie}. \\
\indent There has been a growing interest in the study of maximal subsemigroups within transformation semigroups. Notably, several researchers have made significant contributions.  In  \cite{Yan}, Yang characterized the maximal subsemigroups of the semigroup $O_n$ of all full order-preserving transformations. Dimitrova and Koppitz classified the maximal subsemigroups of the ideals of $O_n$ in \cite{I}. Ganyushkin and Mazorchuk   provided a  description of the maximal subsemigroups of the semigroup 
$POI_n$ in \cite{Maz}. Dimitrova and Koppitz offered a characterization of the maximal subsemigroups of the ideals of the semigroup $POI_n$ \cite{II}.   
In \cite{D}, Dimitrova and Mladenova explored the maximal subsemigroups of the semigroup of all partial order-preserving transformations.  Recently,  Zhao and Hu have determined both the maximal subsemigroups and the maximal subsemibands of the ideals of the monoid of all orientation-preserving and extensive full transformations on $\overline{n}$ \cite{Zuh}.  Additionally, in \cite{Gra}, Graham, Graham, and Rhodes have demonstrated that every maximal subsemigroup of a finite semigroup has certain features, and that every maximal subsemigroup must be one of a small number of types. As is often the case for semigroups, this classification depends on the description of maximal subgroups of certain finite groups. It is worth noting that maximal subsemigroups in many other families of transformation monoids have already been described or quantified, primarily through the work by Dimitrova, East, Fernandes, and other co-authors, as detailed in references such as \cite{fe, di, Jame,  ig} and the associated literature. \\
\indent A non-linear order that is closed to a linear order in some sense is the so-called zig-zag order. The pair $(\overline{n}, \preceq)$ is called a zig-zag poset or fence if 
\begin{center}
	$1 \prec 2 \succ \cdot\cdot\cdot \prec n - 1 \succ n$  if n is odd 
	and  $1 \prec 2 \succ \cdot\cdot\cdot \succ n - 1 \prec n$  if n is even, respectively.
\end{center}
The definition of the partial order $\preceq$ is self-explanatory. The number of order-preserving maps of fences and crowns, as well as transformations on fences, was first considered by Currie and Visentin \cite{currie} and Rutkowski \cite{Rutkowski}. The formula for the number of order-preserving self-mappings of a fence was also introduced by Rutkowski \cite{Rutkowski}. We observe that every element in a fence is either minimal or maximal, and for all $x,y\in \overline{n}$ with $x \prec y$, it follows $y\in \{x - 1, x + 1\}$.  We say that a transformation $\alpha \in I_n$ is fence-preserving if $x \prec y$ implies $x\alpha \prec y\alpha$, for all $x, y \in dom (\alpha)$. We denote by $PFI_n$ the submonoid of $I_n$ of all  fence-preserving partial injections on $\overline{n}$. Fernandes et al. determined the rank and a minimal generating set of the monoid of all order-preserving transformations on an $n$-element zig-zag ordered set \cite{fer 2}. It is worth mentioning that several other properties of monoids of fence-preserving full transformations were also studied in \cite{Dimi, Jen,  Jit, Loh,    Jane, Rat, Tanya}.
\indent  We denote by $IF_n$ the inverse subsemigroup of $PFI_n$ of all regular elements in $PFI_n$. It is easy to see that $IF_n$ is the set of all $\alpha\in PFI_n$ with $\alpha^{-1}\in PFI_n$.   In the present paper, we consider a submonoid of both monoids $IF_n$ and $POI_n$, i.e. a submonoid of $IOF_n=IF_n\cap POI_n$.  Let $a\in dom(\alpha)$ for some $\alpha\in IOF_n$. If $a+1\in dom(\alpha)$ or $a-1\in dom(\alpha)$ then it is easy to verify that $a$ and $a\alpha$ have the same parity, that is, $a$ is odd if and only if $a\alpha$ is odd. However, if $a-1$ and $a+1$ are not in $dom(\alpha)$, then $a$ and $a\alpha$ can have different parity. In order to exclude this case, we require that the image of any $a\in dom(\alpha)$ has the same parity as $a$. In this scenario, we refer to $\alpha$ as parity-preserving. Our focus lies on the set $IOF_n^{par}$ of all parity-preserving transformations in $IOF_n$. Notably, for any $\alpha\in IOF_n^{par}$, the inverse partial injection $\alpha^{-1}$ exists  and possesses order-preserving, fence-preserving, and parity-preserving. This observation implies that $IOF_n^{par}$ can be considered as an inverse submonoid of $I_n$, as explained in \cite{Apa}. \\
\indent In Section \ref{ideal}, we will repeat a charactrization of the elements within $IOF_n^{par}$. We introduce a relation, denoted as $\sim$, on the power set $\mathcal{P}(\overline{n})$ of $\overline{n}$. This relation offers an alternative   characterization the monoid $IOF_n^{par}$, and furthermore,  this characterization leads us to an immediate descriptions of the Green's relation $\mathcal{J}$. Note that the Green's relations $\mathcal{R}, \mathcal{L}$, and $\mathcal{H}$ are already known, given that $IOF_n^{par}$ is an inverse submonoid of $I_n$. However, this paper deals mainly with  the ideals of $IOF_n^{par}$. Of course, the sets $IOF_n^{par}$ and $\{\varepsilon\}$ are ideals of $IOF_n^{par}$, which are often referred to as the trivial ideals.
We will demonstrate that the ideals of $IOF_n^{par}$ are of the form: 
\begin{center}  $I_{P^*}=\{\alpha\in IOF_n^{par}: dom(\alpha)\in P^*\}$ \end{center}
for particular subsets $P^*\subseteq \mathcal{P}(\overline{n})$. For each ideal $I\neq \{\varepsilon\}$ of $IOF_n^{par}$, we characterize the maximal subsemigroups of $I$.  Our investigation will reveal that there are three distinct types of maximal subsemigroups within an ideal $I\neq \{\varepsilon\}$ of $IOF_n^{par}$.
Therefore, the characterization of the ideals of $IOF_n^{par}$ (Section \ref{ideal}) and the description of the maximal subsemigroups of these ideals (Section \ref{max}) constitute the main results of this paper.

\section{The ideals} \label{ideal}
In this section, we will describe the ideals of $IOF_n^{par}$. First, we will provide a characterization of the elements within $IOF_n^{par}$. For the sake of completeness, we will recall the proof of the following Proposition, which describes the partial injections in $IOF_n^{par}$.

\begin{prop} \label{4 choice} \cite{Apa}
	Let  $p\leq n$ and let $\alpha = \bigl(\begin{smallmatrix}
	d_1 &<&d_2&<&   \cdots & < &d_p \\
	m_1 & &m_2 & & \cdots &    & m_p
	\end{smallmatrix}\bigr) \in I_n$. Then $\alpha\in IOF_n^{par}$ if and only if  the following four conditions hold:\\
	(i) $m_1<m_2< \cdots <m_p$; \\
	(ii) $d_1$ and $m_1$ have the same parity; \\
	(iii) $d_{i+1}-d_i=1$ if and only if $m_{i+1}-m_i=1$ for all $i\in\{1,...,p-1\}$;  \\
	(iv) $d_{i+1}-d_i$ is even if and only if $m_{i+1}-m_i$ is even for all $i\in\{1,...,p-1\}$.
\end{prop}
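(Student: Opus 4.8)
The plan is to prove the biconditional by establishing, separately, that membership in $IOF_n^{par}$ forces conditions (i)--(iv), and conversely that these four conditions suffice. Since $IOF_n^{par}=IF_n\cap POI_n$ together with the parity-preserving requirement, I would unpack what each of the three defining properties (order-preserving, fence-preserving, parity-preserving) contributes. The forward direction is mostly a matter of reading off definitions: order-preservation of $\alpha$ gives $m_1<m_2<\cdots<m_p$, which is condition (i); the parity-preserving hypothesis applied to the smallest point $d_1$ gives condition (ii); and conditions (iii) and (iv) should encode the fence-preserving property. The reverse direction requires showing that an $\alpha$ satisfying (i)--(iv) is genuinely order-, fence-, and parity-preserving, and that its inverse lies in $PFI_n$ as well (so that $\alpha\in IF_n$).

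First I would treat the forward direction. Assume $\alpha\in IOF_n^{par}$. Order-preservation on the chain immediately yields (i). For (ii), since $\alpha$ is parity-preserving every $d_i$ has the same parity as $m_i$; in particular $d_1$ and $m_1$ share parity. For (iii), the key observation recalled in the preliminaries is that in a fence $x\prec y$ forces $y\in\{x-1,x+1\}$, i.e. covering pairs in the zig-zag order are exactly consecutive integers. So $d_{i+1}-d_i=1$ means $d_i,d_{i+1}$ are $\preceq$-comparable (one covers the other); fence-preservation of $\alpha$ (or of $\alpha^{-1}$) then forces $m_i,m_{i+1}$ to be comparable, hence consecutive, giving $m_{i+1}-m_i=1$, and symmetrically using $\alpha^{-1}\in PFI_n$ for the converse implication. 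For (iv), I would argue that two points of $\overline{n}$ have the same parity if and only if the gap between them is even, and relate parity-preservation across the pair $d_i,d_{i+1}$ to the parity of $m_{i+1}-m_i$: since $d_i\equiv m_i$ and $d_{i+1}\equiv m_{i+1}$ in parity, $d_{i+1}-d_i$ is even iff $d_i,d_{i+1}$ have equal parity iff $m_i,m_{i+1}$ have equal parity iff $m_{i+1}-m_i$ is even.

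For the reverse direction I would assume (i)--(iv) and verify the three closure properties. Condition (i) gives order-preservation directly. For parity-preservation I would show by induction on $i$ that $d_i\equiv m_i\pmod 2$: the base case is (ii), and the inductive step uses (iv), since $d_{i+1}-d_i$ and $m_{i+1}-m_i$ have the same parity, so adding them to $d_i$ and $m_i$ respectively preserves the congruence. Fence-preservation of both $\alpha$ and $\alpha^{-1}$ then follows because, by the earlier remark, a covering relation $d_i\prec d_{i+1}$ (or $\succ$) in the fence is equivalent to $|d_{i+1}-d_i|=1$, which by (iii) is equivalent to $|m_{i+1}-m_i|=1$; and the direction of the cover (up versus down) is determined by parity of the endpoints, which is preserved by the parity argument just established. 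This shows $\alpha\in IF_n\cap POI_n$ is parity-preserving, i.e. $\alpha\in IOF_n^{par}$.

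The main obstacle, and the step deserving the most care, is the precise bookkeeping in the fence condition (iii) and the interlocking of (iii) with (iv): one must check not merely that consecutive domain points map to consecutive image points, but that the orientation of the fence relation is respected, and that no fence relation is accidentally created or destroyed between non-consecutive points. The earlier observation that in a fence $x\prec y$ implies $y\in\{x-1,x+1\}$ is what collapses the fence-preservation condition to a purely numerical statement about unit gaps, and I expect the cleanest route is to lean on this equivalence throughout rather than reasoning case-by-case about minimal and maximal elements. Since the statement is quoted from \cite{Apa}, I anticipate the proof is recalled for completeness and the verification is elementary once the gap-versus-parity dictionary is set up.
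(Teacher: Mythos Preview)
Your proposal is correct and follows essentially the same route as the paper: the forward direction reads off (i)--(iv) from order-, parity-, and fence-preservation (using $\alpha^{-1}\in PFI_n$ for the reverse implication in (iii)), and the reverse direction first establishes parity-preservation by induction from (ii) and (iv), then deduces fence-preservation of $\alpha$ and $\alpha^{-1}$ from (iii) together with the parity information. The paper's write-up is slightly terser but structurally identical.
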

\begin{proof}   $(\Rightarrow)$: (i) and (ii) hold since $\alpha$ is order- and parity-preserving, respectively. 
	(iii): Since $\alpha\in IF_n$, we have $d_{i+1}\alpha-d_i\alpha=1$, i.e. $m_{i+1}-m_i=1$, if and only if $d_{i+1}-d_i=1$, for all $i\in\{1,...,p-1\}$. 	(iv): Suppose  $d_{i+1}-d_i$ is even. Then $d_{i+1}$ and $d_i$ have the same parity. Moreover, $\alpha$ is parity-preserving. This implies $d_{i+1}\alpha$ and $d_i\alpha$ have the  same parity, i.e. $m_{i+1}-m_i$ is even. The converse direction can be proved dually. \\	
	
	\indent \quad $(\Leftarrow)$: By (i), we can conclude that $\alpha$ is order-preserving. Let $i\in\{1,...,p-1\}$ and suppose $d_i$ and $m_i$ have the same parity. Then $d_i-m_i=2k$ for some integer $k$. According to (iv), we have $(d_{i+1}-d_i)-(m_{i+1}-m_i)=2l$ for some integer $l$. We obtain $2l=d_{i+1}-m_{i+1}-(d_i-m_i)=d_{i+1}-m_{i+1}-2k$, i.e. $d_{i+1}-m_{i+1}=2(l+k)$. This implies that $d_{i+1}$ and $m_{i+1}$ have the same parity. Together with (ii), we can conclude that $\alpha$ is parity-preserving. Now, let  $x\prec y$. This provides $\lvert x- y \rvert=1$. We have $ \lvert x\alpha - y\alpha \rvert = 1 $ by (iii). Since $\alpha$ is parity-preserving, $ \lvert x\alpha - y\alpha \rvert = 1 $ and $x\prec y$ give $x\alpha \prec y\alpha$. Therefore, $\alpha\in PFI_n$. Similarly, we can demonstrate that $\alpha^{-1}\in PFI_n$, i.e., $\alpha\in IF_n$. Hence, we can conclude that $\alpha\in IOF_n^{par}$.
\end{proof}

A set $X\subseteq \mathcal{P}(\overline{n})$ is called convex if, for all $A,B\in X$ with $A\subseteq B$ and for all $C\in \mathcal{P}(\overline{n})$,  the following condition holds:  if $A\subseteq C\subseteq B$, then $C\in X$. Here, $\mathcal{P}(A)$ denotes the power set of $A$, for any $A\subseteq \overline{n}$. The following is easy to verify: \\ 

\begin{remark} \label{convex}
If the empty set is contained in $X\subseteq \mathcal{P}(\overline{n})$, i.e. $\emptyset\in X$, then $X$ is convex if and only if $\mathcal{P}(A)\subseteq X$ for all $A\subseteq X$. 
\end{remark}

\indent We will observe that the domains of all partial transformations within an ideal form a convex set with an additional requirement. In order to describe this requirement, we define a partial order $\sim$ on $\mathcal{P}(\overline{n})$.  Let $k_1, k_2\in \mathcal{P}(\overline{n})$ with $k_1=\{i_1 <i_2< \cdots <i_k\}$ and $k_2= \{j_1 <j_2< \cdots <j_l\}$ for some positive integers $k,l$. We put $k_1\sim k_2$, if the following three properties are satisfied:   \\
\indent (i) $k=l$; \\
\indent (ii) $i_r$ and $j_r$ have the same parity for all $r\in\{1,...,k\}$; \\
\indent (iii) $i_r-i_{r-1}=1$ if and only if $j_r-j_{r-1}=1$ for all $r\in\{2,...,k\}$. \\

It is worth mentioning that here (iii) (above) corresponds with (iii) in Proposition \ref{4 choice}. In fact, for $A,B\subseteq \overline{n}$, there is $\alpha\in IOF_n^{par}$ with $A=dom(\alpha)$ and $B=im(\alpha)$ if and only if $A\sim B$. So any $\alpha\in IOF_n^{par}$ is uniquely determine by domain and image, i.e. by $dom(\alpha)$ and $im(\alpha)$. In particular, we have $dom(\alpha)\sim im(\alpha)$ for any $\alpha\in IOF_n^{par}$. Using this description of the elements in $IOF_n^{par}$ , we obtain immediately the Green's relations $\mathcal{L}, \mathcal{R}, \mathcal{H}$, and $\mathcal{J}$ (see \cite{Howie}) for the inverse submonoid $IOF_n^{par}$ of $PT_n$: 
\begin{prop} Let $a,b\in IOF_n^{par}$ then \\
1) $a\mathcal{L}b$ if and only if $im(a)=im(b)$; \\
2) $a\mathcal{R}b$ if and only if $dom(a)=dom(b)$; \\
3) $a\mathcal{H}b$ if and only if $a=b$; \\
4) $a\mathcal{J}b$ if and only if $dom(a)\sim dom(b)$ (or $im(a)\sim im(b)).$

\end{prop}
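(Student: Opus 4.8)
The plan is to reduce everything to two facts already established for $IOF_n^{par}$: that every $\alpha\in IOF_n^{par}$ is uniquely determined by the pair $(dom(\alpha),im(\alpha))$ with $dom(\alpha)\sim im(\alpha)$, and that for $A,B\subseteq\overline{n}$ there exists $\alpha\in IOF_n^{par}$ with $dom(\alpha)=A$ and $im(\alpha)=B$ exactly when $A\sim B$. At the outset I would record two preliminary observations. First, $\sim$ is an equivalence relation on $\mathcal{P}(\overline{n})$: reflexivity, symmetry and transitivity are all immediate from conditions (i)--(iii), since equality of cardinalities, agreement of parities, and the ``gap one'' biconditional all transfer. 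Second, $IOF_n^{par}$ is a \emph{finite inverse} monoid whose idempotents are precisely the partial identities $id_A$ for $A\subseteq\overline{n}$ (as $A\sim A$ always holds), and for $a\in IOF_n^{par}$ the products $aa^{-1}$ and $a^{-1}a$, computed in $IOF_n^{par}$ (they agree with the products in $I_n$), equal $id_{dom(a)}$ and $id_{im(a)}$ respectively.

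For parts (1) and (2) I would invoke the standard description of $\mathcal{L}$ and $\mathcal{R}$ in an inverse semigroup (see \cite{Howie}): $a\mathcal{L}b$ iff $a^{-1}a=b^{-1}b$, and $a\mathcal{R}b$ iff $aa^{-1}=bb^{-1}$. Since these are intrinsic to the inverse monoid $IOF_n^{par}$ and since $a^{-1}a=id_{im(a)}$ and $aa^{-1}=id_{dom(a)}$, they translate at once into $im(a)=im(b)$ and $dom(a)=dom(b)$. Part (3) then follows from $\mathcal{H}=\mathcal{L}\cap\mathcal{R}$: the relation $a\mathcal{H}b$ is equivalent to $dom(a)=dom(b)$ together with $im(a)=im(b)$, and by the uniqueness of an element of $IOF_n^{par}$ given its domain and image, this forces $a=b$.

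The main work is part (4). Here I would use that $IOF_n^{par}$ is finite, so $\mathcal{J}=\mathcal{D}=\mathcal{L}\circ\mathcal{R}$. Thus $a\mathcal{J}b$ holds iff there is $c\in IOF_n^{par}$ with $a\mathcal{L}c$ and $c\mathcal{R}b$, that is, with $im(c)=im(a)$ and $dom(c)=dom(b)$; and by the domain--image characterization such a $c$ exists iff $im(a)\sim dom(b)$. I would then eliminate the bridging element $c$ using that $\sim$ is an equivalence relation together with the always-valid relation $dom(a)\sim im(a)$. If $dom(a)\sim dom(b)$, then $im(a)\sim dom(a)\sim dom(b)$, so $c$ exists and $a\mathcal{J}b$; conversely $a\mathcal{J}b$ gives $im(a)\sim dom(b)$, whence $dom(a)\sim im(a)\sim dom(b)$. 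The equivalent formulation in terms of images follows from the same chain, since $dom(a)\sim dom(b)$ is equivalent to $im(a)\sim im(b)$.

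I expect the one genuine subtlety, worth stating explicitly rather than the routine inverse-semigroup bookkeeping, to be the appeal to $\mathcal{J}=\mathcal{D}$ for finite semigroups in part (4), combined with the transitivity of $\sim$ that allows the intermediate element $c$ to be removed from the characterization. Everything else is a direct consequence of the descriptions already proved for $IOF_n^{par}$.
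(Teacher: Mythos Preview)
Your proposal is correct. The paper does not actually give a proof of this proposition: it merely asserts that the Green's relations ``follow immediately'' from the description of the elements of $IOF_n^{par}$ together with the reference to \cite{Howie}, and then states the result. Your argument supplies exactly the details the paper suppresses --- the inverse-semigroup identities $a\mathcal{L}b\Leftrightarrow a^{-1}a=b^{-1}b$ and $a\mathcal{R}b\Leftrightarrow aa^{-1}=bb^{-1}$, the uniqueness of an element given its domain and image for $\mathcal{H}$, and the use of $\mathcal{J}=\mathcal{D}$ in a finite semigroup together with transitivity of $\sim$ for part (4) --- so it is entirely in line with what the paper intends.
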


\noindent For a set $X\subseteq \mathcal{P}(\overline{n})$ let 
\begin{center}  $I_X=\{\alpha\in IOF_n^{par}: dom(\alpha)\in X\}$. \end{center}
Now, we are able to characterize the ideals of $IOF_n^{par}$.

\begin{prop} \label{P star}
Any ideal $I$ of $IOF_n^{par} $ is of the form $I=I_{P^*}$, where $P^*$ is a convex subset of $\mathcal{P}(\overline{n})$ with $\emptyset\in P^*$ such that $y\in P^*$ implies $z\in P^*$ for all $z\in \mathcal{P}(\overline{n})$ with $z\sim y$.
\end{prop}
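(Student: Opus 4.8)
The plan is to prove the statement in two directions. First I would show that every set of the form $I_{P^*}$, with $P^*$ satisfying the stated closure conditions, is indeed an ideal; then I would show conversely that every ideal $I$ arises this way by taking $P^* = \{dom(\alpha) : \alpha \in I\}$ and verifying that this $P^*$ has the required properties. Since the statement as phrased only asserts the forward characterization (``any ideal is of this form''), the essential work is the converse direction, but establishing the closure conditions is most naturally done by examining how ideals interact with the $\sim$-description of the elements.

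\medskip

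\noindent\textbf{Setting up $P^*$ and the $\mathcal{J}$-saturation.} Given an ideal $I$, define $P^* = \{dom(\alpha) : \alpha \in I\}$, so that $I = I_{P^*}$ is immediate once I check that membership of $\alpha$ in $I$ depends only on $dom(\alpha)$. This last point follows because $IOF_n^{par}$ is inverse and each $\alpha$ is uniquely determined by $dom(\alpha)$ and $im(\alpha)$: if $dom(\alpha) = dom(\beta)$ then $\alpha\,\mathcal{R}\,\beta$, and an ideal is a union of $\mathcal{R}$-classes (indeed of $\mathcal{J}$-classes). For the closure under $\sim$: if $y \in P^*$, pick $\alpha \in I$ with $dom(\alpha) = y$, and take any $z \sim y$. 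By the remark that $A \sim B$ holds exactly when there is an element of $IOF_n^{par}$ with domain $A$ and image $B$, the relation $z \sim y$ means there is $\gamma \in IOF_n^{par}$ with $dom(\gamma) = z$ and $im(\gamma) = y$. Then $\gamma\alpha \in I$ (as $I$ is a left ideal) has domain $z$, so $z \in P^*$. This also shows $\emptyset \in P^*$, since $\varepsilon \in I$ for any nonempty ideal, or directly because $\emptyset \sim \emptyset$ forces the empty domain into $P^*$ once $I$ contains anything composable down to it.

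\medskip

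\noindent\textbf{Convexity.} The remaining and most delicate point is convexity of $P^*$. By Remark~\ref{convex}, since $\emptyset \in P^*$, convexity is equivalent to $\mathcal{P}(A) \subseteq P^*$ whenever $A \in P^*$; so I would show that if $A = dom(\alpha)$ for some $\alpha \in I$ and $C \subseteq A$, then $C \in P^*$. The idea is to realize the restriction $\alpha|_C$, or more precisely an element with domain $C$, as a product involving $\alpha$ and a suitable partial identity. If $e$ is the partial identity on the subset $C$, then $e \in IOF_n^{par}$ (a partial identity is trivially order-, fence-, and parity-preserving), and $e\alpha \in I$ since $I$ is a left ideal; the domain of $e\alpha$ is exactly $C$. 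Hence $C \in P^*$. The one thing to verify carefully here is that $e$, the partial identity on an arbitrary subset $C \subseteq \overline{n}$, really lies in $IOF_n^{par}$ --- which it does, since the identity map satisfies conditions (i)--(iv) of Proposition~\ref{4 choice} automatically (each $d_i = m_i$). This makes partial identities on all subsets available as ``projections,'' which is precisely what forces downward closure of domains and hence convexity.

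\medskip

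\noindent\textbf{Main obstacle.} I expect the crux to be the interplay between the convexity condition and the $\sim$-closure: one must check they are compatible and that both genuinely follow from the single assumption that $I$ is a two-sided ideal. Convexity comes from left-multiplying by partial identities (using $\mathcal{R}$/domain structure), while the $\sim$-closure comes from the fact that $\sim$ is exactly the $\mathcal{J}$-relation on domains, so an ideal must be a union of $\mathcal{J}$-classes. The care needed is in confirming that every $z \sim y$ is attainable as a domain via an actual element of $IOF_n^{par}$ (guaranteed by the equivalence ``$A \sim B$ iff some $\alpha$ has $dom(\alpha)=A$, $im(\alpha)=B$'' stated just before Proposition~3), so that left- or right-multiplication indeed produces an element of $I$ with the desired domain. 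Once these two mechanisms --- partial identities for convexity, $\sim$-witnessing elements for $\mathcal{J}$-saturation --- are in place, assembling $I = I_{P^*}$ is routine.
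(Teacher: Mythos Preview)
Your proposal is correct and follows essentially the same route as the paper: define $P^*=\{dom(\alpha):\alpha\in I\}$, obtain $\emptyset\in P^*$ from $\varepsilon\in I$, get convexity by left-multiplying with partial identities, and get $\sim$-closure by left-multiplying with the $\sim$-witnessing element. The only cosmetic difference is that for the equality $I=I_{P^*}$ you invoke the fact that an ideal is a union of $\mathcal{R}$-classes, whereas the paper writes out the explicit factorization $b=\alpha_1\gamma\alpha_2$ with $\gamma\in I$; these amount to the same argument.
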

\begin{proof}
Let $P^*$ be a convex subset of $\mathcal{P}(\overline{n})$ as defined in the statement. Additionally, let $a\in I_{P^*}$ and $b\in IOF_n^{par}$. We observe that $im(a)\sim dom(a)$  by Proposition \ref{4 choice}. Consequently, $im(a)\in P^*$. Further, we have $dom(ab)\subseteq dom(a)$, i.e. $dom(ab)\in P^*$ by Remark \ref{convex}. This implies $ab\in I_{P^*}$. Moreover, we observe that $im(ba)\in P^*$ since $im(ba)\subseteq im(a)$ and Remark \ref{convex}. This gives $dom(ba)\in P^*$ since $dom(ba)\sim im(ba)$. So $ba\in I_{P^*}$, i.e. $I_{P^*}$ is an ideal. \\

Conversely, let $I$ be an ideal and let $P^*=\{dom(a):a\in I\}$. Note that any ideal contains the empty transformation $\varepsilon$. This gives $\emptyset\in P^*$. Now, let $A\in P^*$. This means that there exists $a\in I_{P^*}=I$ such that $A=dom(\alpha)$. Let $B\subseteq A$ and $c\in Id_{\overline{n}}$ with $dom(c)=B$. Then $c\in IOF_n^{par}$, where $B=dom(ca)$ and we observe that $ca\in I$. This provides $B=dom(ca)\in P^*$. So, we have shown that  $\mathcal{P}(A)\subseteq P^*$. Thus, $P^*$ is a convex set as established in Remark \ref{convex}. \\
\indent Let $y\in P^*$. Then there is $a\in I$ with $dom(a)=y$. Further, let $z\in \mathcal{P}(\overline{n})$ with $z\sim y$. Then, there is $b\in IOF_n^{par}$ with $dom(b)=z$ amd $im(b)=y$. We get that $dom(ba)=dom(b)$ and $ba\in I$ because of $a\in I$. So $z=dom(b)=dom(ba)\in P^*$. It clear that $I\subseteq I_{P^*}$  by the definition of the sets $P^*$ and $I_{P^*}$. Let $b\in I_{P^*}$. Then there is $\gamma\in I$ with $dom(b)=dom(\gamma)$ and then  $im(\gamma)\sim im(b)$. Furthermore, there are $\alpha_1,\alpha_2\in IOF_n^{par}$ with $dom(\alpha_1)=dom(b),im(\alpha_1)=dom(\gamma), dom(\alpha_2)=im(\gamma)$, and $im(\alpha_2)=im(b)$. So, we see that $\alpha_1\gamma\alpha_2=b$, i.e.  $b\in I$. Consequently, $I=I_{P^*}$. 
\end{proof}
 
  Clearly, $\{\emptyset\}$ is a convex set and  $I_{\{\emptyset\}}=\{\varepsilon\}$ is the least ideal of $IOF_n^{par}$. Now, we determine the minimal ideals of $IOF_n^{par}$. An ideal $I$ is called minimal if $I \neq \{\varepsilon\}$ and $M\subseteq I$ implies $M=I$, for all non-trivial ideals $M$ of $IOF_n^{par}$.

\begin{prop} Let $I$ be a non-trivial ideal of $IOF_n^{par}$. Then $I$ is a minimal ideal of $IOF_n^{par}$ if and only if $I=I_{P^*}$, where either $P^*=\{\emptyset, \{1\}, \{3\},...,\{n-1\}\}$ or $P^*=\{\emptyset,\{2\},\{4\},...,\{n\}\}$ if $n$ is even and either $P^*=\{\emptyset, \{1\}, \{3\},...,\{n\} \}$ or $P^*=\{\emptyset,\{2\},\{4\},...,\{n-1\}\}$ if $n$ is odd, respectively.
\end{prop}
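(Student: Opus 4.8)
The plan is to derive the statement directly from the characterization of ideals in Proposition \ref{P star} once the behaviour of $\sim$ on singletons is understood. The first thing I would record is that for one-element sets $\{i\},\{j\}$ the relation $\{i\}\sim\{j\}$ holds precisely when $i$ and $j$ have the same parity: conditions (i) and (iii) in the definition of $\sim$ are automatic for singletons, so only condition (ii) survives. Hence among singletons there are exactly two $\sim$-classes, the odd singletons and the even singletons, and these are the $P^*$-building blocks that will force minimality.

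For the direction $(\Leftarrow)$ I would first confirm that each listed $P^*$ meets the hypotheses of Proposition \ref{P star}, so that $I_{P^*}$ really is an ideal. Convexity follows from Remark \ref{convex}, since $\emptyset\in P^*$ and every nonempty member is a singleton $\{i\}$ with $\mathcal{P}(\{i\})=\{\emptyset,\{i\}\}\subseteq P^*$; closure under $\sim$ holds because $\emptyset$ is $\sim$-related only to itself while a singleton is $\sim$-related only to singletons of the same parity, all of which lie in $P^*$ by construction. To establish minimality, let $M=I_{Q^*}$ be any non-trivial ideal with $M\subseteq I_{P^*}$, equivalently $Q^*\subseteq P^*$ (the equivalence uses that every set in $\mathcal{P}(\overline{n})$ is the domain of some partial identity in $IOF_n^{par}$). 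As $M\neq\{\varepsilon\}$, the set $Q^*$ contains a nonempty member, necessarily one of the singletons of $P^*$, say $\{i\}$; closure of $Q^*$ under $\sim$ then pulls in all singletons of the parity of $i$, forcing $Q^*=P^*$ and $M=I_{P^*}$.

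For the direction $(\Rightarrow)$, suppose $I=I_{P^*}$ is minimal. Being non-trivial, $P^*$ contains some nonempty $A$; picking $a\in A$ and invoking Remark \ref{convex} gives $\{a\}\in P^*$, and closure under $\sim$ places every singleton of the parity of $a$ in $P^*$. I would then set $Q^*=\{\emptyset\}\cup\{\{j\}:j \text{ has the parity of } a\}$; by the argument of the previous paragraph $I_{Q^*}$ is a non-trivial ideal with $Q^*\subseteq P^*$, so minimality of $I$ gives $I_{Q^*}=I$, that is $P^*=Q^*$. Reading off the largest odd and even elements of $\overline{n}$ according to the parity of $n$ then yields exactly the four displayed possibilities.

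The step I expect to be the crux is the collapse of $\sim$ to parity on singletons, since it is this that makes a single parity class of singletons the smallest possible non-trivial ideal: the instant any singleton is admitted, closure under $\sim$ drags in its entire parity class, and no nonempty set strictly between $\{\emptyset\}$ and such a class can be $\sim$-closed. Everything else reduces to the routine verifications of convexity and $\sim$-closure already modelled in the proof of Proposition \ref{P star}.
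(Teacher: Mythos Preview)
Your proposal is correct and follows essentially the same approach as the paper: both arguments rest on the observation that $\{i\}\sim\{j\}$ reduces to equal parity, together with the description of ideals in Proposition~\ref{P star} and the convexity Remark~\ref{convex}. The only cosmetic difference is that in the $(\Rightarrow)$ direction the paper first argues by contradiction that every element of a minimal ideal has rank at most $1$ before identifying $P^*$, whereas you directly exhibit the sub-ideal $I_{Q^*}$ generated by one parity class of singletons and invoke minimality; your route is slightly more direct but not a different idea.
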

\begin{proof} Without loss of generality, we can assume that $n$ is even. The proof for $n$ is odd is similar. \\
\indent Note that $\{1\}\sim \{3\}\sim \cdots \sim \{n-1\}$ and $\{2\}\sim \{4\}\sim \cdots \sim \{n\}$. By Proposition \ref{P star}, we get that $I_{P^*}$ is a minimal ideal, whenever $P^*=P^O=\{\emptyset, \{1\}, \{3\},...,\{n-1\} \}$ or $P^*=P^E=\{\emptyset,\{2\},\{4\},...,\{n\}\}$. \\

Conversely, let $I$ be a minimal ideal of $IOF_n^{par}$. Then, there is $P^* \subseteq \mathcal{P}(\overline{n})$, satisfying the conditions in Proposition \ref{P star}, such that $I=I_{P^*}$.  Assume  there is $a\in I$ such that $rank(a)\geq 2$.  This provides that there exists $b\in I$ with $rank(b)=1$ and $dom(b)\in \mathcal{P}(dom(a))$. Let $I'=\{b'\in I : rank(b')\leq1\}$. It is obvious that $I'$ is an ideal. This gives $\{\varepsilon\}\neq I'\subset I$, a contradiction to $I$ is a minimal ideal. So, $rank(a)\leq 1$ for all $a\in I$. We have now $P^* = \{dom(a) : a\in I\}\subseteq \{\emptyset, \{1\}, \{2\},...,\{n\}\}$.   Assume $P^*\neq P^O$ and $P^*\neq P^E$.  Moreover, assumex $P^E\cap P^*\neq\{\emptyset\}$. Then $P^E\subseteq P^*$ since $P^*$ satisfies the conditions in Proposition \ref{P star}. Since $I_{P^*}$ as well as $I_{P^E}$ are minimal ideals, we obtain $P^*=P^E$, a contradiction to $P^*\neq P^E$. Hence $P^E\cap P^*=\{\emptyset\}$. Similarly, we have $P^O\cap P^*=\{\emptyset\}$. But $P^E\cap P^*=P^O\cap P^*=\{\emptyset\}$ gives $P^*=\{\emptyset\}$, a contradiction to $I_{P^*}\neq \{\varepsilon\}$. Thus, $I=I_{P^*}$ with $P^*=P^O$ or $P^* = P^E$.  \end{proof}

\section{The maximal subsemigroups of the ideals on $IOF_n^{par}$} \label{max}

In this section, we determine the maximal subsemigroups of the ideals on $IOF_n^{par}$. First, we need a few technical tools. Let $y=\{i_1<i_2<\cdots< i_k\}\in P^*$  for some positive integer $k\geq 2$ and let $t\in\{1,2,...,k\}$. Then we put $y^{[t]}=y\backslash \{i_t\}$.  For $x,y\in P^*$, we write $x\sqsubset y$ if there is $t\in \{1,...,|y|\}$, such that $x=y^{[t]}$. Otherwise, we write $x\not\sqsubset y$. \\

\begin{lem} \label{dom,im}
Let $y_1,y_2,z_1,z_2\in P^*$ with $|y_1|=|z_1| \geq 2$ and let $r,s\in\{1,2,...,|y_1|\}$ such that  $y_1^{[r]}=z_1^{[s]}= y_1\cap z_1$,  $ y_2\sim y_1$, $z_2\sim z_1$, and $y_2^{[r]}\sim z_2^{[s]}\sim y_1\cap z_1$. Then there are $\theta,\delta\in I_{P^*}$ with $rank (\theta)=rank(\delta)=|y_1|$ such that $dom(\theta\delta)=y_2^{[r]}$ and $im(\theta\delta)=z_2^{[s]}$. 
\end{lem}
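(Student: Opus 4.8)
The plan is to build $\theta\delta$ as a composite that factors through the common set $w:=y_1\cap z_1$, exploiting the fact recorded after Proposition~\ref{4 choice}: for subsets $A,B\subseteq\overline{n}$ there is a (necessarily unique) element of $IOF_n^{par}$ with domain $A$ and image $B$ precisely when $A\sim B$. First I would let $\theta$ be the unique element of $IOF_n^{par}$ with $dom(\theta)=y_2$ and $im(\theta)=y_1$; this exists because $y_2\sim y_1$. Dually, I would let $\delta$ be the unique element with $dom(\delta)=z_1$ and $im(\delta)=z_2$, which exists because $z_1\sim z_2$. Since $y_2,z_1\in P^*$, both $\theta$ and $\delta$ lie in $I_{P^*}$, and $rank(\theta)=|y_2|=|y_1|$, $rank(\delta)=|z_1|=|y_1|$, as required.

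It then remains to compute the domain and image of $\theta\delta$. Write $k=|y_1|$ and $y_1=\{a_1<\cdots<a_k\}$, $y_2=\{b_1<\cdots<b_k\}$, $z_1=\{c_1<\cdots<c_k\}$, $z_2=\{e_1<\cdots<e_k\}$; since an order-preserving bijection between two $k$-element subsets of $\overline{n}$ sends the $i$-th smallest element to the $i$-th smallest, we have $b_i\theta=a_i$ and $c_i\delta=e_i$. Because $x\in dom(\theta\delta)$ iff $x\in y_2$ and $x\theta\in z_1$, and $im(\theta)=y_1$, the set $dom(\theta\delta)$ is the $\theta$-preimage of $y_1\cap z_1=w$. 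The hypothesis $y_1^{[r]}=w$ means $w=y_1\setminus\{a_r\}$, so its preimage under the index-preserving map $\theta$ is $y_2\setminus\{b_r\}=y_2^{[r]}$; hence $dom(\theta\delta)=y_2^{[r]}$. Dually, $im(\theta\delta)=w\delta$, and the hypothesis $z_1^{[s]}=w$ means $w=z_1\setminus\{c_s\}$, whose image under $\delta$ is $z_2\setminus\{e_s\}=z_2^{[s]}$; hence $im(\theta\delta)=z_2^{[s]}$.

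The one point requiring care is the index bookkeeping just used: one must justify that deleting the $r$-th smallest element of $y_1$ transports, under the order-preserving bijection $\theta\colon y_2\to y_1$, to deleting the $r$-th smallest element of $y_2$. This is exactly the observation that such a bijection carries the $i$-th smallest element to the $i$-th smallest element, so it commutes with deletion by position; I expect this to be the only step needing an explicit word of proof, the rest being bookkeeping. As a consistency check, note that the remaining hypotheses $y_2^{[r]}\sim z_2^{[s]}\sim w$ come for free from the construction: restricting $\theta$ to $y_2^{[r]}$ gives an element of $IOF_n^{par}$ with domain $y_2^{[r]}$ and image $w$, so $y_2^{[r]}\sim w$, and likewise $z_2^{[s]}\sim w$ via $\delta$; thus they need not be invoked separately.
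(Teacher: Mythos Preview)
Your proof is correct and follows essentially the same route as the paper: you define the very same $\theta$ (with $dom(\theta)=y_2$, $im(\theta)=y_1$) and $\delta$ (with $dom(\delta)=z_1$, $im(\delta)=z_2$) and then read off $dom(\theta\delta)$ and $im(\theta\delta)$ from the intersection $y_1\cap z_1$. Your write-up is in fact more explicit than the paper's about the index bookkeeping (the order-preserving bijection sends $i$-th smallest to $i$-th smallest), and your closing remark that the hypotheses $y_2^{[r]}\sim z_2^{[s]}\sim y_1\cap z_1$ are automatic is a valid observation the paper does not make.
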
 
\begin{proof}
Let $y_1,y_2,z_1,z_2\in P^*$ with $|y_1|=|z_1| \geq 2$ and let $r,s\in\{1,2,...,|y_1|\}$ such that  $y_1^{[r]}=z_1^{[s]}= y_1\cap z_1$, $ y_2\sim y_1$, $z_2\sim z_1$, and $y_2^{[r]}\sim z_2^{[s]}\sim y_1\cap z_1$. Then there are $\theta,\delta\in I_{P^*}$ with $dom(\theta)= y_2, im(\theta)=y_1, dom(\delta)=z_1$, and $im(\delta)= z_2$. Because of $y_1^{[r]}=z_1^{[s]} = y_1\cap z_1$, then $rank(\theta\delta) = |y_1|-1  $. Because of $ y_2\sim y_1$  and $z_2\sim z_1$ with $y_2^{[r]}\sim z_2^{[s]}\sim y_1\cap z_1$, then $dom(\theta\delta)=y_2^{[r]}$ and $im(\theta\delta)=z_2^{[s]}$. 
\end{proof}

Let $I_{P^*}^u$ be the set of all $\alpha\in I_{P^*}$ with $dom(\alpha)\neq  y_2^{[r]}$ or $im(\alpha)\neq z_2^{[s]}$, whenever $y_2, z_2\in P^*$ and  $r,s\in\{1,2,...,|y_1|\}$ with $ y_2\sim y_1, z_2\sim z_1, y_1^{[r]}=z_1^{[s]}= y_1\cap z_1$, and  $y_2^{[r]}\sim z_2^{[s]}\sim y_1\cap z_1$ for some $y_1,z_1\in P^*$ with  $|y_1|=|z_1| \geq 2$. Directly from Lemma \ref{dom,im}, we obtain:  $\alpha\notin I_{P^*}^u$ if and only if there are $\theta, \delta\in I_{P^*}$ with $rank(\theta)=rank(\delta)=rank(\theta\delta)+1$ such that $\alpha =\theta\delta$. Since $P^*$ is a convex subset of $\mathcal{P}(\overline{n})$, we can conclude:
\begin{cor} \label{>} 
Let $\alpha\in I_{P^*}$. Then $\alpha\notin I_{P^*}^u$ if and only if there are $\theta,\delta\in I_{P^*}$ with $rank(\theta), rank(\delta)>rank(\theta\delta)$ such that $\alpha=\theta\delta$.

\end{cor}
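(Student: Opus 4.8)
The plan is to prove the two directions of this ``if and only if'' separately, leaning on the immediate consequence of Lemma \ref{dom,im} that is recorded just before the corollary, namely that $\alpha\notin I_{P^*}^u$ exactly when $\alpha=\theta\delta$ with $rank(\theta)=rank(\delta)=rank(\theta\delta)+1$. The nontrivial content of the corollary is that the rigid requirement ``$rank(\theta)=rank(\delta)=rank(\theta\delta)+1$'' can be relaxed to the much weaker ``$rank(\theta),rank(\delta)>rank(\theta\delta)$'' without changing the class of products obtainable. The forward direction is then essentially free: if $\alpha\notin I_{P^*}^u$, the recorded consequence of the lemma hands us a factorization $\alpha=\theta\delta$ with $rank(\theta)=rank(\delta)=rank(\alpha)+1$, and in particular $rank(\theta),rank(\delta)>rank(\theta\delta)$, which is precisely what the corollary asks for.

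The substantive direction is the converse. Here I would assume a factorization $\alpha=\theta\delta$ with $rank(\theta),rank(\delta)>rank(\theta\delta)=rank(\alpha)$ and produce from it a factorization with the rank drop equal to exactly one, so that the recorded consequence of the lemma applies and yields $\alpha\notin I_{P^*}^u$. The idea is to interpolate: set $k=rank(\theta)$ and note $dom(\theta\delta)\subseteq dom(\theta)$ with $|dom(\theta\delta)|<|dom(\theta)|$, so one can build a chain of subsets of $dom(\theta)$ whose cardinalities decrease by one at each step, from $dom(\theta)$ down to $dom(\theta\delta)$. Convexity of $P^*$ (invoked via Remark \ref{convex}, exactly as the sentence preceding the corollary signals) guarantees that every set in this chain again lies in $P^*$, so each intermediate restriction of $\theta$ is again an element of $I_{P^*}$. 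Composing these one-step restrictions recovers $\alpha$ as a product of maps whose ranks decrease by exactly one at a single chosen step and stay constant elsewhere, which after grouping gives the desired factorization with unit rank drop.

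The main obstacle will be bookkeeping the interpolation so that the single controlled rank drop produces matched data $y_1,z_1,y_2,z_2,r,s$ of the kind quantified in the definition of $I_{P^*}^u$: one must check that when the rank drops by one, the domains and images of the two factors stand in the relation $y_1^{[r]}=z_1^{[s]}=y_1\cap z_1$ with the $\sim$-compatibility $y_2^{[r]}\sim z_2^{[s]}\sim y_1\cap z_1$. This is where the $\sim$-structure must be tracked carefully, using that $dom(\beta)\sim im(\beta)$ for every $\beta\in IOF_n^{par}$ (from Proposition \ref{4 choice}) together with the fact, noted after Proposition \ref{4 choice}, that an element of $IOF_n^{par}$ is determined by its domain and image. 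I expect the cleanest route is to first reduce to the case where exactly one of $rank(\theta),rank(\delta)$ exceeds $rank(\alpha)$ by dealing with the other factor through such a restriction, then apply the recorded consequence of Lemma \ref{dom,im} directly; verifying that the interpolation keeps all intermediate sets inside $P^*$ and preserves the required $\sim$-relations is the step that carries the real weight.
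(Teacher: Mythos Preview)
Your overall strategy matches the paper's: the corollary is stated as a direct consequence of the recorded characterization (rank drop exactly one) together with convexity of $P^*$, and the paper gives no further argument beyond the sentence ``Since $P^*$ is a convex subset of $\mathcal{P}(\overline{n})$, we can conclude.'' Your forward direction is exactly right.

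For the backward direction, however, the concrete execution you sketch is off in two places. First, the ``chain of one-step restrictions of $\theta$'' is not the right picture: composing successive restrictions of the same map just returns the smallest restriction, and no product of such restrictions exhibits a controlled single rank drop. Second, your proposed ``cleanest route'' --- reducing to the case where exactly one of $rank(\theta),rank(\delta)$ exceeds $rank(\alpha)$ --- actually forces the other factor to have rank equal to $rank(\alpha)$, and then the recorded consequence of Lemma~\ref{dom,im} (which needs \emph{both} ranks equal to $rank(\alpha)+1$) does not apply.

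The clean fix is a single simultaneous restriction. Write $W=im(\theta)\cap dom(\delta)$, so $|W|=rank(\alpha)$. Pick $a\in im(\theta)\setminus W$ and $b\in dom(\delta)\setminus W$ (both exist since $rank(\theta),rank(\delta)>rank(\alpha)$), and set $y_1=W\cup\{a\}$, $z_1=W\cup\{b\}$. Let $\theta'$ be $\theta$ restricted to $\theta^{-1}(y_1)$ and $\delta'$ be $\delta$ restricted to $z_1$. Convexity of $P^*$ (Remark~\ref{convex}) keeps $dom(\theta'),dom(\delta')\in P^*$, so $\theta',\delta'\in I_{P^*}$; since $a\notin dom(\delta)$ and $b\notin im(\theta)$ we get $y_1\cap z_1=W$, hence $\theta'\delta'=\alpha$ with $rank(\theta')=rank(\delta')=rank(\alpha)+1$. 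Now the recorded consequence of Lemma~\ref{dom,im} applies directly to give $\alpha\notin I_{P^*}^u$, and no chain or further interpolation is needed.
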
  
Our initial observation is that all maximal subsemigroups of an ideal $I$ have the form $I\backslash T$, where all transformations in $T$ have the same rank.

\begin{lem} \label{beta}
Let $J$ be a maximal subsemigroup of $I_{P^*}$ and let $\alpha\notin J$. Then $\beta\in J$ for all $\beta\in I_{P^*}$ with $rank (\beta)\neq rank (\alpha)$.
\end{lem}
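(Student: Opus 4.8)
The statement asserts that a maximal subsemigroup $J$ of $I_{P^*}$ cannot omit two elements of different ranks: once some $\alpha\notin J$ is fixed, every $\beta\in I_{P^*}$ with $rank(\beta)\neq rank(\alpha)$ must lie in $J$. The natural strategy is a proof by contradiction that exploits maximality together with the ideal/rank structure of $I_{P^*}$. So suppose, for contradiction, that there also exists $\beta\notin J$ with $rank(\beta)\neq rank(\alpha)$; without loss of generality assume $rank(\beta)<rank(\alpha)$.

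First I would use maximality of $J$ in the standard way: since $J$ is a proper subsemigroup and $J\cup\{\alpha\}$ strictly contains it, the subsemigroup generated by $J$ and $\alpha$ must be all of $I_{P^*}$. In particular $\beta$ can be written as a product of copies of $\alpha$ and elements of $J$. The key point is that multiplication in $I_{P^*}$ can only decrease or preserve rank (this is immediate since $dom(\theta\delta)\subseteq dom(\theta)$ and $im(\theta\delta)\subseteq im(\delta)$, and rank equals the size of the domain), and similarly $\langle J,\beta\rangle=I_{P^*}$, so $\alpha$ is a product of copies of $\beta$ and elements of $J$. The rank-monotonicity forces any factorization of $\alpha$ as a product involving only elements of $J$ and $\beta$ to have every factor of rank at least $rank(\alpha)$; but $rank(\beta)<rank(\alpha)$, so $\beta$ cannot appear as a factor, whence $\alpha\in\langle J\rangle=J$, contradicting $\alpha\notin J$.

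The cleanest way to make the above rigorous is to package the rank bound: I would show that the set $I_{P^*}\setminus J$, consisting of the elements omitted by a subsemigroup $J$, is ``upward closed under factorization'' in the sense that if a product lands outside $J$ then at least one factor lies outside $J$ (the contrapositive of $J$ being closed under products). Combined with the observation that every factor of a product has rank $\geq$ the rank of the product, we get: if $\gamma\notin J$ and $\gamma=\prod\gamma_i$ with all $\gamma_i\in\langle J,\alpha\rangle$, then some $\gamma_i\notin J$, and since the only generator outside $J$ is $\alpha$ itself, that factor is (a power-free occurrence of) $\alpha$, forcing $rank(\gamma)\le rank(\gamma_i)=rank(\alpha)$. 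Applying this with $\gamma=\beta$ (writable since $\langle J,\alpha\rangle=I_{P^*}$) yields $rank(\beta)\le rank(\alpha)$, and the symmetric argument with the roles of $\alpha$ and $\beta$ swapped yields $rank(\alpha)\le rank(\beta)$, so $rank(\alpha)=rank(\beta)$ — the desired contradiction.

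\textbf{Main obstacle.}
The delicate step is the factorization argument in the direction of the smaller-rank element: I must be sure that when $\alpha$ is expressed using the generators $J\cup\{\beta\}$, the element $\beta$ genuinely cannot contribute to producing something of rank $rank(\alpha)>rank(\beta)$. This rests entirely on rank being non-increasing under composition in $I_{P^*}$, which holds here because ranks are domain sizes and $I_{P^*}$ is an ideal (closed under the relevant products). I would state this monotonicity as a short preliminary observation and then the contradiction is essentially immediate; no appeal to the finer $\sim$-structure from Lemma \ref{dom,im} or Corollary \ref{>} is needed for this particular lemma, since only the coarse rank invariant is used.
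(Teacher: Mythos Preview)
Your proposal is correct and follows essentially the same idea as the paper: assume a second omitted element $\beta$ of different rank, use that rank is non-increasing under composition in $I_{P^*}$, and combine this with maximality of $J$ to reach a contradiction. The paper packages the argument as ``$\langle J,\alpha\rangle$ is a proper subsemigroup because it cannot contain the higher-rank $\beta$'', while you phrase the contrapositive (any factorization of the missing element over $J\cup\{\alpha\}$ must involve $\alpha$, forcing a rank inequality); these are the same argument in slightly different clothing.
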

\begin{proof}
Assume there is $\beta\in I_{P^*}$ such that $\beta\notin J$ with $rank (\beta)\neq rank (\alpha)$. Suppose $rank (\beta) > rank (\alpha)$. We observe that $\langle J,\alpha \rangle$ is semigroup, where $ rank(a\alpha), rank(\alpha a) \leq rank(\alpha)$ for all $a\in J$ and we see that $\langle J,\alpha \rangle \neq I_{P^*}$ since $\beta\in I_{P^*}$ but $\beta \notin \langle J,\alpha \rangle$. Moreover, $J\subset  \langle J,\alpha \rangle\neq I_{P^*}$, a contradiction to $J$ is a maximal subsemigroup of $I_{P^*}$. Suppose $rank (\beta) < rank (\alpha)$. Then we can show by contradiction that $J\subset  \langle J,\beta \rangle\neq I_{P^*}$ in the same way. 
\end{proof}

  \noindent For the remainder of this section, let
  $P^*$ be a convex subset of $X$, satisfying the conditions in  Proposition \ref{P star}, with $I_{P^*}\neq \{ \varepsilon \}$. Now, we determine several subsemigroups of $I_{P^*}$ and will show that they are exactly the maximal ones. \\
 Let 
  $C=\{  \{g\}: g\in P^*, g\sqsubset y \ \text{for some} \ y\in P^* \text{with} \ y\nsim z \ \text{for all} \ z\in P^*\backslash \{y\} \ \text{or} \ g\not\sqsubset z \ \text{for all} \ z\in P^*\} $. \\
 For  $\Gamma_1\in P^*$ with $|\Gamma_1|\geq 2$, we define  $A_{\Gamma_1}=\{\Gamma\in P^* : \Gamma\sim \Gamma_1\}$ and \\ $B_{\Gamma_1}=\{\Gamma^t : \Gamma\in A_{\Gamma_1}, t\in\{1,2,...,|\Gamma|\}$.  \\
 We put $A=\{A_{\Gamma}: \Gamma\in P^*, |\Gamma|\geq 2, |A_{\Gamma}|\geq 2 \}$ and  $B=\{B_{\Gamma}: \Gamma\in P^*, |\Gamma|\geq 2, |A_{\Gamma}|\geq 2 \}$.  \\
Let $\Delta_1\in B\cup C$. We define \begin{center} $BC_{\Delta_1} = \{\Delta\in B \cup C : x\sim y \ \text{for all} \ x\in \Delta, y\in \Delta_1\}$. \end{center} 
 For  $\Delta\in BC_{\Delta_1}$, we also define \begin{center} $T_{\Delta}=\{ c \in I_{P^*} :  dom(c),im(c)\in \Delta\}$. \end{center} 
 And  for a partition $Q=(Q_1,Q_2)$ of $BC_{\Delta_1}$, let \begin{center} 
 $T_{Q}=\{c\in I_{P^*} :  dom(c)\in \tilde{\Delta},im(c)\in \hat{\Delta} \ \text{with} \ \tilde{\Delta} \in Q_1,  \hat{\Delta}\in Q_2 \}$. \end{center}

 \begin{lem} \label{alp} Let $g\in P^*$ with $\{g\}\in C$ and $|BC_{\{g\}}|=1$. Then 
 $I_{P^*}\backslash T_{\{g\}}$ with $T_{\{g\}}\subseteq I_{P^*}^u$  is a semigroup.
 \end{lem}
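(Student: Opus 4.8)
The plan is to show that $I_{P^*}\setminus T_{\{g\}}$ is closed under composition. I would begin by identifying $T_{\{g\}}$ explicitly: since $dom(c),im(c)\in\{g\}$ forces $dom(c)=im(c)=g$, and since an element of $IOF_n^{par}$ is uniquely determined by its domain and image (the $\sim$-characterization following Proposition \ref{4 choice}), the set $T_{\{g\}}$ consists of the single partial identity $\iota_g$ with $dom(\iota_g)=im(\iota_g)=g$, of rank $|g|$. Because $I_{P^*}$ is already a semigroup, closure of $I_{P^*}\setminus T_{\{g\}}$ amounts to the single implication: whenever $a,b\in I_{P^*}$ satisfy $ab=\iota_g$, one of $a,b$ equals $\iota_g$.

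To prove this I would argue by rank. From $ab=\iota_g$ we have $rank(ab)=|g|$, and since $dom(ab)\subseteq dom(a)$ and $im(ab)\subseteq im(b)$, both $rank(a)\ge|g|$ and $rank(b)\ge|g|$. The hypothesis $T_{\{g\}}\subseteq I_{P^*}^u$ says $\iota_g\in I_{P^*}^u$, so by Corollary \ref{>} the element $\iota_g$ admits no factorisation into two factors of strictly larger rank; in particular we cannot have both $rank(a)>|g|$ and $rank(b)>|g|$. Hence at least one factor, say $a$ (the case of $b$ being symmetric), has $rank(a)=|g|$.

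With $rank(a)=|g|$ and $dom(ab)\subseteq dom(a)$ of equal cardinality, we get $dom(a)=dom(ab)=g$, whence $im(a)\subseteq dom(b)$ and $im(a)\sim g$; since $P^*$ is $\sim$-closed (Proposition \ref{P star}), $im(a)\in P^*$. Dually, if instead $rank(b)=|g|$, then $im(b)=g$ and $dom(b)\sim g$ with $dom(b)\in P^*$. In either case the argument reduces to a single combinatorial claim: the only member of $P^*$ lying in the $\sim$-class of $g$ is $g$ itself. Granting this, $im(a)=g$ (resp. $dom(b)=g$), so $a$ (resp. $b$) has equal domain and image $g$ and therefore equals $\iota_g$, finishing the closure proof.

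The heart of the matter, and the step I expect to be the main obstacle, is precisely this claim, which is where $\{g\}\in C$ and $|BC_{\{g\}}|=1$ are used; note it is genuinely necessary, for if some $h\in P^*$ with $h\sim g$, $h\ne g$ existed, the element $a$ with $dom(a)=g$, $im(a)=h$ would satisfy $aa^{-1}=\iota_g$ with $a,a^{-1}\ne\iota_g$, violating closure. I would prove the claim by contradiction: assuming such an $h$ exists, I would exhibit a second block $\Delta\in B\cup C$, different from $\{g\}$, all of whose elements are $\sim g$, contradicting $|BC_{\{g\}}|=1$. The natural candidate is $\Delta=\{h\}$, and the work is to verify $\{h\}\in C$ by transporting the defining clause of $\{g\}\in C$ across $\sim$: if $g\sqsubset y$ for a $\sim$-isolated $y$, one shows $h\sqsubset y$ as well (using the $\sim$-isolation of $y$), while if $g\not\sqsubset z$ for all $z\in P^*$ one shows the same for $h$. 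The delicate point is that insertions and deletions are compatible with $\sim$ only up to parity and to the boundary of $\overline{n}$, so this transport requires careful parity-and-gap bookkeeping; this is the real content of the lemma.
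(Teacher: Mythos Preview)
Your argument follows essentially the same route as the paper's proof. Both identify $T_{\{g\}}$ as the singleton $\{\iota_g\}$, both split the closure verification by rank, and both dispatch the case $rank(a),rank(b)>|g|$ via Corollary~\ref{>} and the hypothesis $T_{\{g\}}\subseteq I_{P^*}^u$. For the remaining case the paper simply asserts that $dom(a)\nsim dom(\alpha)$ (respectively $dom(b)\nsim im(\alpha)$) when $a,b\ne\iota_g$ have rank at most $|g|$; this is exactly your ``key claim'' that $g$ is the unique member of its $\sim$-class inside $P^*$, which you make explicit and recognise as the place where $\{g\}\in C$ and $|BC_{\{g\}}|=1$ enter.

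The one point worth flagging is your sketched verification of that key claim. Your plan is to show $\{h\}\in C$ by transporting the $C$-clause of $g$ along $\sim$, but this transport is not automatic: for instance, if $g\not\sqsubset z$ for all $z\in P^*$ while $h=z^{[t]}$ for some $z\in P^*$, one would like a $w\sim z$ with $w^{[t]}=g$, and simple examples (e.g.\ $g=\{1,7\}$, $h=\{1,3\}$, $z=\{1,2,3\}$ in $\overline{7}$) show that no such $w$ need exist for the given $t$. So the ``transport'' step, as you yourself anticipate, needs a more careful case analysis (possibly falling back on a block $B_z\in B$, or on a different witness $z'$ for $h\sqsubset z'$) rather than a direct lift. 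The paper does not carry out this verification either; it treats the $\sim$-isolation of $g$ as immediate from the hypotheses. In that sense your write-up is already at least as detailed as the paper's, and your instinct that this is ``the real content of the lemma'' is well placed.
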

 
 \begin{proof} Let $\alpha\in T_{\{g\}}$. Because of $\{g\}\in C$ and $|BC_{\{g\}}|=1$, we get that $\alpha\in Id_{\overline{n}}$ and $T_{\{g\}}=\{\alpha\}$. Let $a,b\in I_{P^*}\backslash \{\alpha\}$.  If $rank(a)\leq rank(\alpha)$ or $rank(b)\leq rank(\alpha)$ then $ab\neq \alpha$ because  $dom(a)\nsim dom(\alpha)$ and $dom(b)\nsim im(\alpha)$.  If $rank(a),rank(b)>rank(\alpha)$ and $rank(ab)=rank(\alpha)$ then by Corollary \ref{>}, we have $ab \neq \alpha$ since $\alpha\in I_{P^*}^u$. This shows $ab\in I_{P^*}\backslash \{\alpha\}$.  
 Consequently, $I_{P^*}\backslash \{\alpha\}$ is a semigroup. 
 \end{proof} 
 
 \begin{lem} \label{yyy} Let $y_1\in P^*$ with $|y_1|\geq 2$, $B_{y_1}\in B $, and  $|BC_{B_{y_1}}|=1$. Then 
 $I_{P^*}\backslash T_{B_{y_1}}$ with $T_{B_{y_1}}\subseteq I_{P^*}^u$ is a semigroup.
 \end{lem}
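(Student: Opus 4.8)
The plan is to show directly that $S:=I_{P^*}\backslash T_{B_{y_1}}$ is closed under composition, following the pattern of the proof of Lemma \ref{alp} but with the single partial identity there replaced by the whole block $T_{B_{y_1}}$. First I would record that every member of $B_{y_1}$ has cardinality $|y_1|-1$, so, writing $r_0:=|y_1|-1\ge 1$, every element of $T_{B_{y_1}}$ has rank $r_0$. Hence $ab\in T_{B_{y_1}}$ can occur only when $rank(ab)=r_0$, and since $rank(ab)\le\min\{rank(a),rank(b)\}$ we may assume $rank(a),rank(b)\ge r_0$; otherwise $rank(ab)<r_0$ and $ab\in S$ is immediate. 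The task thus reduces to the three cases $(rank(a),rank(b))$ with both $>r_0$, with $rank(a)=r_0$, and with $rank(b)=r_0$.

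If $rank(a)>r_0$ and $rank(b)>r_0$ while $rank(ab)=r_0$, then $rank(a),rank(b)>rank(ab)$, so Corollary \ref{>} gives $ab\notin I_{P^*}^u$; as $T_{B_{y_1}}\subseteq I_{P^*}^u$ by hypothesis, $ab\notin T_{B_{y_1}}$, i.e. $ab\in S$. The cases $rank(a)=r_0$ and $rank(b)=r_0$ are dual, so I treat the former. Here $rank(ab)=r_0=rank(a)$ forces $dom(ab)=dom(a)$ and $im(ab)=(im(a))b$. If $dom(a)\notin B_{y_1}$ then $dom(ab)\notin B_{y_1}$ and $ab\in S$; so suppose $dom(a)\in B_{y_1}$. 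Since $a\in S$, this forces $im(a)\notin B_{y_1}$.

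The crux is to rule this last situation out, and this is where $|BC_{B_{y_1}}|=1$ must enter. The structural fact I would isolate is that $B_{y_1}$ is $\sim$-saturated at rank $r_0$ inside $P^*$: every $w\in P^*$ with $|w|=r_0$ and $w\sim v$ for some $v\in B_{y_1}$ already lies in $B_{y_1}$. Applying this to $w=im(a)$, which satisfies $im(a)\in P^*$, $|im(a)|=r_0$, and $im(a)\sim dom(a)\in B_{y_1}$ by Proposition \ref{4 choice}, yields $im(a)\in B_{y_1}$; then $dom(a),im(a)\in B_{y_1}$ give $a\in T_{B_{y_1}}$, contradicting $a\in S$. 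So the bad subcase never occurs and $ab\in S$ in every case, proving $S$ is a semigroup.

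Proving the saturation fact is the main obstacle. First, $|BC_{B_{y_1}}|=1$ forces $B_{y_1}\in BC_{B_{y_1}}$, so all members of $B_{y_1}$ are pairwise $\sim$ and any candidate $w$ is $\sim$ to every member of $B_{y_1}$. I would then argue by contradiction: if some such $w\notin B_{y_1}$, I locate the block of $B\cup C$ to which $w$ belongs — a singleton of $C$ when $r_0=1$, and a block $B_\Gamma$ coming from a rank-$(r_0+1)$ set $\Gamma\in P^*$ that reduces to $w$ when $r_0\ge 2$ — show that block is $\sim$-homogeneous, lies in $BC_{B_{y_1}}$, and is distinct from $B_{y_1}$, thereby contradicting $|BC_{B_{y_1}}|=1$. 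The delicate points, which I expect to be the hardest part, are verifying from the precise definitions of $C$, $B$, and $\sqsubset$ that such a block genuinely exists in $B\cup C$ (that $w$ is realised either as an admissible singleton of $C$ or as a deletion of a suitable set not $\sim$-equivalent to $y_1$) and that it is genuinely different from $B_{y_1}$.
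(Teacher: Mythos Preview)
Your overall architecture is exactly the paper's: split on whether $rank(a),rank(b)$ exceed $r_0=|y_1|-1$, dispatch the strictly-larger case via Corollary~\ref{>} together with the standing hypothesis $T_{B_{y_1}}\subseteq I_{P^*}^u$, and treat the remaining cases by showing $dom(ab)$ (or $im(ab)$) cannot land in $B_{y_1}$. The paper's own proof is much terser here: for $rank(a)\le r_0$ or $rank(b)\le r_0$ it simply asserts ``we can immediately deduce that $dom(ab)\nsim dom(\alpha)$'' and moves on. You are right to isolate the content of that assertion as the $\sim$-saturation of $B_{y_1}$ (every $w\in P^*$ with $w\sim v$ for some $v\in B_{y_1}$ already lies in $B_{y_1}$), and your observation that $|BC_{B_{y_1}}|\ge 1$ forces $B_{y_1}$ to be $\sim$-homogeneous (else $BC_{B_{y_1}}=\emptyset$) and hence $B_{y_1}\in BC_{B_{y_1}}$ is correct and is what the paper is implicitly using.

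Where your plan needs attention is precisely the point you flag: the contradiction argument for saturation requires producing, from a hypothetical $w\notin B_{y_1}$, a block $\Delta\in B\cup C$ with $w\in\Delta$, $\Delta\ne B_{y_1}$, and every element of $\Delta$ $\sim$-equivalent to $v$. For $\{w\}\in C$ this is immediate, but if $w\sqsubset z$ only for $z$ with $|A_z|\ge 2$, the block $B_z\in B$ that contains $w$ need not be $\sim$-homogeneous (its elements $z'^{[t]}$ for varying $t$ can lie in different $\sim$-classes), so $B_z$ may fail to lie in $BC_{B_{y_1}}$. You will need to argue either that this configuration cannot occur under the hypothesis $|BC_{B_{y_1}}|=1$, or to find a different witness. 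The paper does not address this at all, so in this respect your proposal is already more careful than the published argument; the gap you anticipate is real, and closing it is where the work lies.
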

 \begin{proof} Let $\alpha\in T_{B_{y_1}}$ and let $a, b\in I_{P^*}\backslash T_{B_{y_1}}$. Note that if $rank(a)\leq rank(\alpha)$ or $rank(b)\leq rank(\alpha)$ then we can immediately deduce that $dom(ab)\nsim dom(\alpha)$. Thus, $ab\notin B_{y_1}$ and $ab\in I_{P^*}\backslash T_{B_{y_1}}$. Suppose $rank(a),rank(b)>rank(\alpha)$ and $rank(ab)=rank(\alpha)$. Then by Corollary \ref{>}, we have $ab\notin I_{P^*}^u$. This shows $ab\in I_{P^*}\backslash T_{B_{y_1}}$. Consequently, $I_{P^*}\backslash T_{B_{y_1}}$ is a semigroup.   
 \end{proof}
 
 \begin{lem} \label{bc} Let $Q=( Q_1, Q_2 )$ be a partition of $BC_{\Delta_1}$, where 
 $\Delta_1\in B\cup C$. Then $I_{P^*}\backslash T_{Q}$ with $T_Q\subseteq I_{P^*}^u$ is a semigroup. 
 \end{lem}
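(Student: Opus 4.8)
The plan is to show directly that $I_{P^*}\setminus T_{Q}$ is closed under composition, following the template of Lemmas \ref{alp} and \ref{yyy}. First I would record the structural fact underlying everything: since $\sim$ preserves cardinality and every set occurring in a block of $BC_{\Delta_1}$ is $\sim$-equivalent to the members of $\Delta_1$, all sets appearing in the blocks of $BC_{\Delta_1}$ lie in a single $\sim$-class $K$ and hence share one cardinality $\rho$. Consequently every element of $T_{Q}$ has rank $\rho$. So I take $a,b\in I_{P^*}\setminus T_{Q}$, assume for contradiction that $ab\in T_{Q}$, and note that then $rank(ab)=\rho$, that $dom(ab)$ lies in some $\tilde{\Delta}\in Q_1$ and $im(ab)$ in some $\hat{\Delta}\in Q_2$, and that $ab\in I_{P^*}^{u}$ because $T_{Q}\subseteq I_{P^*}^{u}$.

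Since $rank(ab)\le\min\{rank(a),rank(b)\}$, we have $rank(a),rank(b)\ge\rho$, which splits the argument. If $rank(a)>\rho$ and $rank(b)>\rho$, then $rank(a),rank(b)>rank(ab)$, so Corollary \ref{>} exhibits $ab$ as a rank-dropping product and hence $ab\notin I_{P^*}^{u}$, contradicting $ab\in I_{P^*}^{u}$. This disposes of the high-rank case exactly as in the previous two lemmas; the content of the present lemma is therefore concentrated in the case where one of the two factors already has rank $\rho$.

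In that case, by the symmetry of $a$ and $b$ under inversion (using $(ab)^{-1}=b^{-1}a^{-1}$, $rank(c^{-1})=rank(c)$, and that $c\mapsto c^{-1}$ carries $I_{P^*}\setminus T_{(Q_1,Q_2)}$ onto $I_{P^*}\setminus T_{(Q_2,Q_1)}$), I may assume $rank(a)=\rho$. Then $rank(ab)=rank(a)$ forces $dom(ab)=dom(a)$ and $im(a)\subseteq dom(b)$. In particular $dom(a)=dom(ab)$ lies in $\tilde{\Delta}\in Q_1$; since $a\notin T_{Q}$, its image $v:=im(a)$ can lie in no block of $Q_2$. The decisive observation is now about the intermediate set $v$: it satisfies $v\sim dom(a)$ and $|v|=\rho$, so $v\in K\cap P^*$. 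I would argue that $v$ must belong to some block of $BC_{\Delta_1}$, and combined with the previous sentence that block lies in $Q_1$. When $rank(b)=\rho$ as well, then $dom(b)=v$ (as $v\subseteq dom(b)$ and both have size $\rho$) and $im(b)=im(ab)\in\hat{\Delta}\in Q_2$, so $b$ has its domain in a $Q_1$-block and its image in a $Q_2$-block, i.e. $b\in T_{Q}$, a contradiction. When $rank(b)>\rho$, I would instead exploit that $v\subsetneq dom(b)$ makes both $v$ and $im(ab)=(v)b$ extendable inside $P^*$; combining this extendability with that of $dom(a)$ read off from its block, Lemma \ref{dom,im} would rebuild $ab$ as a product of two maps of rank $\rho+1$, again contradicting $ab\in I_{P^*}^{u}$.

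The hard part is the middle step: showing that the intermediate set $v$ is genuinely captured by a block of $BC_{\Delta_1}$, and closing the mixed subcase $rank(a)=\rho<rank(b)$. Both rest on how extendability of a set inside $P^*$ (whether $x\sqsubset z$ for some $z\in P^*$) interacts with the defining dichotomy of the blocks: members of a $B$-block are one-point deletions and hence extendable, whereas the singletons placed in $C$ are precisely the sets that are either non-extendable or only extend into a $\sim$-isolated set. Here I expect to lean on the two closure properties of $P^*$, namely that it is downward closed (Remark \ref{convex}) and $\sim$-closed (Proposition \ref{P star}), to transport extensions across the class $K$, and on the hypothesis $T_{Q}\subseteq I_{P^*}^{u}$, whose very role is to forbid the one configuration in which $dom(ab)$ and $im(ab)$ are simultaneously extendable with a common $\rho$-element trace and the rank-dropping factorisation of Lemma \ref{dom,im} would otherwise be available. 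The careful bookkeeping of which of the relevant blocks are of type $B$ and which of type $C$ is, I anticipate, where the real work of the proof lies.
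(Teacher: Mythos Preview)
Your skeleton matches the paper closely: the high--rank case via Corollary~\ref{>}, the reduction of the equal--rank case to a block argument, and the use of the inversion $c\mapsto c^{-1}$ to avoid repeating the mirror case are all exactly what the paper does (the paper simply says ``dually'' instead of invoking the inversion explicitly). Your equal--rank subcase is also the paper's argument verbatim, including the step you flag as delicate, namely that $v=im(a)$ lies in some block of $BC_{\Delta_1}$; the paper asserts this without further comment.

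Where you diverge from the paper is the mixed subcase $rank(a)=\rho<rank(b)$, and here your plan has a genuine gap. You propose to manufacture, via Lemma~\ref{dom,im}, a factorisation of $ab$ as a product of two maps of rank $\rho+1$, thereby contradicting $ab\in I_{P^*}^{u}$. But any such factorisation $ab=\theta\delta$ with $rank(\theta)>\rho$ forces $dom(ab)\subsetneq dom(\theta)\in P^*$, hence forces $dom(ab)=dom(a)$ to be $\sqsubset$--extendable in $P^*$. When the $Q_1$--block $\tilde\Delta$ containing $dom(a)$ is of the $C$--type ``$g\not\sqsubset z$ for all $z\in P^*$'', no such extension exists, $ab$ is automatically in $I_{P^*}^{u}$, and your route to a contradiction is closed. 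You yourself note that $C$--blocks may contain non--extendable sets, so this obstruction is real, not merely a missing detail.

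The paper sidesteps this by never trying to contradict $ab\in I_{P^*}^{u}$ in the mixed case. Instead, it tracks the block of the \emph{other} end of $ab$. In your setup ($rank(a)=\rho$, $rank(b)=\rho+1$ after restricting $b$), one writes $v=im(a)=dom(b)^{[t]}$ for some $t$, whence $im(ab)=(v)b=im(b)^{[t]}$. Since $dom(b)\sim im(b)$, the sets $dom(b)^{[t]}$ and $im(b)^{[t]}$ lie in a common $B$--block, so $im(ab)$ lands in the same $Q_1$--block that $v$ does, contradicting $im(ab)\in\hat\Delta\in Q_2$. This argument uses the extendability of $v$ inside $dom(b)$ (which you already have) but requires nothing about extendability of $dom(a)$. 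Replacing your Lemma~\ref{dom,im} manoeuvre by this block--tracking step closes the gap and brings your proof in line with the paper's.
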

 \begin{proof}  Let $a,b\in I_{P^*}\backslash T_{Q}$ and let $c\in T_Q$.  If $rank(a)<rank(c)$ or $rank(b)<rank(c)$ then $rank(ab)<rank(c)$. So, $ab\in I_{P^*}\backslash T_{Q}$.  If $rank(a)=rank(b)=rank(c)$ then we need to consider only the case that  $dom(a)\in \Delta'$ for some $\Delta'\in Q_1$ and $im(b)\in \Delta''$ for some $\Delta''\in Q_2$.  
  We get that $im(a)\in Y$ for some $Y\in Q_1$ and $dom(b)\in Y'$ for some $Y'\in Q_2$ because $a,b\in I_{P^*}\backslash T_Q$. We see that $rank(ab)<rank(c)$ because $ Q_1 \cap Q_2= \emptyset$. Suppose that  $rank(a)>rank(b)=rank(c)$   and  $rank(ab)= rank(c)$. Note if $im(b)\in \Delta\in Q_1$ then $im(ab)\in \Delta \in Q_1$. So, $ab\in I_{P^*}\backslash T_{Q}$. Suppose $im(b)\in \Delta$ for some $\Delta\in Q_2$. Then $dom(b) \in  \Delta' $ for some $\Delta'\in Q_2$ because of $b\in I_{P^*}\backslash T_{Q}$ and we have $dom(b)\subset im(a)$. We put $dom(a)=y_1, im(a)=y_2$, and $dom(b)=y_2^{[t]}$ for some $t\in\{1,2,...,|y_2|\}$. This gives $dom(ab)=y_1^{[t]}$. So, we see that  $y_1^{[t]}\in \Delta'\in Q_2$ because of $y_1\sim y_2$. Then $ab\in I_{P^*}\backslash T_{Q}$. If $rank(b)>rank(a)=rank(\alpha)$   then we obtain $ab\in I_{P^*}\backslash T_Q$ dually. If $rank(a),rank(b)>rank(\alpha)$ with $rank(ab)=rank(\alpha)$ then by Corollary \ref{>}, we have  $ab\notin I_{P^*}^u$. This shows $ab\in I_{P^*}\backslash T_{Q}$. Consequently, $I_{P^*}\backslash T_{Q}$ is a semigroup. 
 \end{proof}
 
 Now, we are able to characterize the maximal subsemigroups of $IOF_n^{par}$, which is the main result of this section.
 
\begin{thm} Let $J$ be a subsemigroup of $I_{P^*}$. Then $J$ is a maximal subsemigroup of $I_{P^*}$ if and only if  $J$ has one of the following forms. \\
\indent 1) $J=I_{P^*}\backslash T_{\{g\}}$ with  $|BC_{\{g\}}|=1$  for some $g\in P^*$ such that $\{g\}\in C$ and $T_{\{g\}}\subseteq I_{P^*}^u$; \\ 
\indent 2) $J=I_{P^*}\backslash T_{B_{y_1}}$ with $|BC_{B_{y_1}}|=1$ for some $y_1\in P^*$ such that $|y_1|\geq 2$, $B_{y_1}\in B $, and $T_{B_{y_1}}\subseteq I_{P^*}^u$; \\
\indent 3) $J=I_{P^*}\backslash T_{Q}$ for some partition $Q=( Q_1, Q_2 )$  of $BC_{\Delta_1}$, where $\Delta_1\in B \cup C$ and $T_Q\subseteq I_{P^*}^u$.   
\end{thm}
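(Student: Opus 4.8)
The plan is to prove both implications, taking the complement $T:=I_{P^*}\setminus J$ as the basic object. Throughout I will exploit that every $\alpha\in I_{P^*}$ is determined by the pair $(dom(\alpha),im(\alpha))$ with $dom(\alpha)\sim im(\alpha)$, so a rank-$k$ element may be recorded as a pair $(D,R)$ of $\sim$-equivalent size-$k$ sets from $P^*$; moreover $(D_1,R_1)(D_2,R_2)$ keeps rank $k$ exactly when $R_1=D_2$, in which case it equals $(D_1,R_2)$, and otherwise the rank drops. I will also use that cross-$\sim$-class products of rank-$k$ elements always drop rank.

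For the direction $(\Leftarrow)$, Lemmas \ref{alp}, \ref{yyy}, and \ref{bc} already give that $I_{P^*}\setminus T_{\{g\}}$, $I_{P^*}\setminus T_{B_{y_1}}$, and $I_{P^*}\setminus T_Q$ are subsemigroups, so only maximality remains. For each such $J$ I fix an arbitrary $\alpha\in T$ and show $\langle J,\alpha\rangle=I_{P^*}$, which suffices because $I_{P^*}=J\cup T$. In form 1) this is immediate, since $|BC_{\{g\}}|=1$ forces $T_{\{g\}}$ to be the single partial identity on $g$, so $J$ has a one-element complement. Form 3) is clean: writing $\alpha=(D_\alpha,R_\alpha)$ with $D_\alpha$ in a block of $Q_1$ and $R_\alpha$ in a block of $Q_2$, and a target $\beta=(D_\beta,R_\beta)\in T_Q$, the relocating elements $\theta=(D_\beta,D_\alpha)$ and $\delta=(R_\alpha,R_\beta)$ lie in $J$ (the image of $\theta$ and the domain of $\delta$ stay inside $\bigcup Q_1$, resp.\ $\bigcup Q_2$, and $Q_1\cap Q_2=\emptyset$), so $\theta\alpha\delta=\beta$; all four sets are $\sim$-equivalent because $BC_{\Delta_1}$ lies in a single $\sim$-class. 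Form 2) is the delicate case: since $|BC_{B_{y_1}}|=1$ the block is isolated, so the rank-$(|y_1|-1)$ relocating elements used above now lie in $T$ themselves. Instead I relocate $dom(\alpha)$ and $im(\alpha)$ by multiplying with rank-$|y_1|$ elements of $J$, whose existence and the required deletion pattern follow from convexity of $P^*$ together with Proposition \ref{4 choice}.

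For the direction $(\Rightarrow)$, let $J$ be maximal and $T=I_{P^*}\setminus J\neq\emptyset$. By Lemma \ref{beta} all members of $T$ share a common rank $k$, so $J$ contains every element of rank $\neq k$. Next I show $T\subseteq I_{P^*}^u$: if some $\alpha\in T$ satisfied $\alpha\notin I_{P^*}^u$, then by Corollary \ref{>} we could write $\alpha=\theta\delta$ with $rank(\theta),rank(\delta)>k$, so $\theta,\delta\in J$ while $\theta\delta=\alpha\in T$, contradicting closure of $J$. It then remains to identify the shape of $T$. Closure of $J$ under same-rank products forces, within each $\sim$-class $\mathcal{C}$, that the relation $\{(D,R):(D,R)\notin T\}$ be \emph{transitive} under the composition $(D_1,R)(R,R_2)=(D_1,R_2)$; dually, products with higher-rank elements (again via Lemma \ref{dom,im}) tie together the behaviour across the different deletion positions and across $\sim$-classes. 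Combining these closure conditions with the minimality furnished by maximality of $J$ will pin $T$ down to exactly one of: a single isolated partial identity (giving 1)), a full isolated homogeneous block $T_{B_{y_1}}$ (giving 2)), or the pattern $T_Q$ induced by a $2$-partition of $BC_{\Delta_1}$ (giving 3)); the families $C$, $B$, and $BC_{\Delta_1}$ are defined precisely to encode the isolation and $\sim$-connectivity needed in each case.

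I expect the necessity direction, specifically the combinatorial classification of the admissible patterns $T$, to be the main obstacle. A priori $T$ could mix several $\sim$-classes and several deletion positions, and the difficulty is to show that transitivity of the complement relation, the constraint $T\subseteq I_{P^*}^u$, and the relocation constraints coming from higher-rank products are jointly rigid enough that only the three listed patterns survive, with maximality excluding any strictly smaller admissible $T$. A secondary technical point is the existence, needed in form 2) of the sufficiency direction, of the rank-$|y_1|$ elements realizing the required relocations, which must be checked against convexity of $P^*$ and the parity and consecutivity bookkeeping of Proposition \ref{4 choice}.
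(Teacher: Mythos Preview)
Your outline matches the paper's strategy closely. In the sufficiency direction the paper does exactly what you propose: form~1) is maximal because $|T_{\{g\}}|=1$; form~3) uses your same-rank relocators $\theta_1=(D_\beta,D_\alpha)$ and $\theta_2=(R_\alpha,R_\beta)$, which lie in $J$ since both coordinates sit on the same side of the partition; and form~2) lifts to rank-$|y_1|$ relocators $\theta_1,\theta_2$ with domains and images in $A_{y_1}$, whose existence comes simply from $A_{y_1}$ being a $\sim$-class of size $\ge 2$---no further appeal to convexity or Proposition~\ref{4 choice} is needed beyond that.

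For necessity, your first two moves (Lemma~\ref{beta} for the single rank, Corollary~\ref{>} for $T\subseteq I_{P^*}^u$) are the paper's. The classification you flag as the obstacle is not handled in the paper by an abstract transitivity argument but by a concrete four-way case split on $(dom(\lambda),im(\lambda))$ for $\lambda\in T$: each coordinate either lies in some $B_y$ with $|A_y|\ge 2$ (i.e.\ has the form $y_3^{[t]}$ with $y_3$ possessing a genuine $\sim$-mate) or else contributes a singleton to $C$. After this split the paper \emph{constructs} the partition directly, setting
\[
\tilde B\tilde C=\{\Delta\in BC_{\Delta_1}:\exists\,\beta\in T,\ dom(\beta)\in\Delta\},\qquad
\hat B\hat C=\{\Delta\in BC_{\Delta_1}:\exists\,\beta\in T,\ im(\beta)\in\Delta\},
\]
proving $\tilde B\tilde C\cap\hat B\hat C=\emptyset$ (this is where your transitivity intuition becomes a precise contradiction, using the case analysis and rank-$|y_1|$ conjugation to show no block can occur as both a domain-block and an image-block of $T$), proving $\tilde B\tilde C\cup\hat B\hat C=BC_{\Delta_1}$ by factoring $\alpha$ through an arbitrary $h\in\Delta$, and then checking that every $\gamma$ with domain in $\bigcup\tilde B\tilde C$ and image in $\bigcup\hat B\hat C$ already lies in $T$. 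Forms~1) and~2) emerge as the degenerate situations $|BC_{\Delta_1}|=1$, distinguished by whether the single block is a singleton from $C$ or a $B_{y_1}$ from $B$; otherwise $(\tilde B\tilde C,\hat B\hat C)$ is the partition $Q$. So your plan is on target, but expect the execution to be this block-level bookkeeping rather than a clean relational statement.
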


\begin{proof}
Let  $J$ be a maximal subsemigroup of $I_{P^*}$ and let $\alpha\in I_{P^*}\backslash J$. \\ \indent Assume  $\alpha\notin I_{P^*}^u$. Then by the definition of $I_{P^*}^u$, we get that $dom(\alpha)= y_2^{[r]}$ and $im(\alpha)= z_2^{[s]}$, where $y_2, z_2\in P^*$ and  $r,s\in\{1,2,...,|y_1|\}$ with $ y_2\sim y_1, z_2\sim z_1, y_1^{[r]}=z_1^{[s]}= y_1\cap z_1$, and  $y_2^{[r]}\sim z_2^{[s]}\sim y_1\cap z_1$ for some $y_1,z_1\in P^*$ with  $|y_1|=|z_1| \geq 2$. By Lemma \ref{dom,im}, we get that there are $\theta, \delta \in I_{P^*}$ with $rank(\theta)=rank(\delta)=|y_2|$ such that  $\theta\delta=\alpha$. Then $\theta, \delta \in J$ by Lemma \ref{beta}, a contradiction to $\alpha\notin J$. So, $\alpha\in I_{P^*}^u$.  \\
  \indent Suppose  that $m=dom(\alpha)$ and $m=im(\alpha)$ for all $m\in P^*$ with $ m \sim dom(\alpha) $.  So, we can put $m=dom(\alpha)=im(\alpha)$. This provides $\{m\}\in C$, i.e. $BC_{\{m\}}=\{\{m\}\}$.  So, $\alpha\in Id_{\overline{n}}$ and by the definition of $T_{\{m\}}$, we have that $T_{\{m\}}=\{\alpha\}$.   It is easy to see that  $J\cap \{\alpha\}=\emptyset$, this means $J\subseteq I_{P^*}\backslash \{\alpha\}$ and we have $I_{P^*}\backslash \{\alpha\}$ is a semigroup by Lemma \ref{alp}.  Together with $J$ is maximal subsemigroup of $I_{P^*}$, we have  $J=I_{P^*}\backslash \{\alpha\}$. \\
\indent Suppose there is $m \in P^*$ with $m\sim dom(\alpha)$ such that $ m \neq dom(\alpha)$ or $m \neq im(\alpha)$ and there are $y_1\in P^*$ with   $|y_1|\geq 2$ and $t\in\{1,2,...,|y_1|\}$ such that  for all $k\in P^*$ with $k\sim dom(\alpha)$  there is $y_3\in P^*$ with $y_3\sim y_1$ and $k=y_3^{[t]}$.  This implies that there exists $y_2\in P^*$ with $y_2\sim y_1\neq y_1$. Then $A_{y_1}\in A$ and thus, $B_{y_1}\in B$. We can conclude that $\{B_{y_1}\}=BC_{B_{y_1}}$ and $|B_{y_1}|\geq 2$.  So, we get $dom(\alpha), im(\alpha)\in B_{y_1}$. Then there are $y_4,y_5\in A_{y_1}$ such that  $dom(\alpha)=y_4^{[t]}$ and $im(\alpha)= y_5^{[t]}$. \\
\indent Assume there is $\theta\in J$ with $dom(\theta), im(\theta) \in B_{y_1}$. There are $y_6, y_7 \in A_{y_1}$ such that  $dom(\theta)=y_6^{[t]}, im(\theta)=y_7^{[t]}$. We have $\gamma_1,\gamma_2\in I_{P^*}$ with $dom(\gamma_1)=y_4, im(\gamma_1)=y_6, dom(\gamma_2)=y_7$, and $im(\gamma_2)=y_5$. This gives $\gamma_1, \gamma_2\in J$ because of $rank(\gamma_1), rank(\gamma_2)>rank(\alpha)$ together with Lemma \ref{beta}. We get that $\gamma_1\theta\gamma_2=\alpha$, a contradiction to $\alpha\notin J$ and $J$ is semigroup. Thus, $\theta\notin J$ for all $\theta\in I_{P^*}$ with $dom(\theta), im(\theta) \in B_{y_1}$, i.e. we have  $\theta\notin J$ for all $\theta\in T_{B_{y_1}}$.  This means $J\cap T_{B_{y_1}}=\emptyset$. So $J\subseteq I_{P^*}\backslash T_{B_{y_1}}$ and by Lemma \ref{yyy}, we have that $I_{P^*}\backslash T_{B_{y_1}}$ is a semigroup. Together with $J$ is maximal subsemigroup of $I_{P^*}$, we have  $J=I_{P^*}\backslash T_{B_{y_1}}$. \\
\indent Assume there is $\theta\in T_{B_{y_1}}$ with $\theta\notin I_{P^*}^u$.  By Corollary \ref{>}, there are $a_1,a_2\in I_{P^*}$ with $rank(a_1),rank(a_2)>rank(a_1a_2)$ such that $\theta =a_1a_2$. This provides, $a_1,a_2\in J$ by Lemma \ref{beta}, i.e. $\theta\in J$, a contradiction to $T_{B_{y_1}}\cap J = \emptyset$. Thus,  $T_{B_{y_1}}\subseteq I_{P^*}^u$. \\ 
\indent Suppose  for all $y_1\in P^*$ with $|y_1|\geq 2$ and for all $t\in\{1,2,...,|y_1|\}$, there is $k\in P^*$ with  $k\sim dom(\alpha)$ such that $k\neq y_3^{[t]}$ for all $y_3\in P^*$ with $y_3\sim y_1$, and  there is $m \in P^*$ with $m\sim dom(\alpha)$ such that $ m \neq dom(\alpha)$ or $m \neq im(\alpha)$.   Let $\lambda\in I_{P^*}\backslash J$ with $dom(\lambda)\sim dom(\alpha)$. Then we have the following four cases:

   1. $dom(\lambda)=y_3^{[t]}$ for some  $t\in\{1,2,...,|y_3|\}$ and $im(\lambda)=z_3^{[s]}$ for some  $s\in\{1,2,...,|z_3|\}$ and  $y_3,z_3\in P^*$ and there are $y_1,y_2,z_1,z_2\in P^*$ with $y_3\sim y_1\sim y_2\neq y_1$ and  $z_3\sim z_1\sim z_2\neq z_1$. \\
\indent Then there is $k\in P^*$ with $k\sim dom(\alpha)$ such that $k\neq y_3^{[t]}$. Assume $y_1\sim z_1$ and $t=s$.
 For $\theta\in I_{P^*}$, with $dom(\theta)=y_4^{[t]}, im(\theta)=z_4^{[s]}$, and $y_4\sim z_4\sim y_1$, we have $\theta\notin J$. Otherwise, let $\gamma_1,\gamma_2\in I_{P^*}$ with $dom(\gamma_1)=y_3, im(\gamma_1)=y_4, dom(\gamma_2)=z_4$, and $im(\gamma_2)=z_3$. This gives $\gamma_1, \gamma_2\in J$ because of Lemma \ref{beta}. We get that $\gamma_1\theta\gamma_2=\lambda$, a contradiction to $\lambda\notin J$ because $J$ is semigroup.   Moreover, there are $\beta_1, \beta_2\in I_{P^*}$ with $dom(\beta_1)=y_3^{[t]}, im(\beta_1)=dom(\beta_2)=k$, and $im(\beta_2)=z_3^{[s]}$. So, we observe that $\beta_1\beta_2=\lambda$. This show  $\beta_1\notin J$ or $\beta_2\notin J$. Suppose $\beta_1\notin J$. Let $\theta\in I_{P^*}$ with $dom(\theta)=z_3^{[s]}, im(\theta)=y_3^{[t]}$, where $y_3,z_3\in P^*$ with  $y_3\sim z_3\sim y_1$. As we have shown above, we have $\theta\notin J$. 
Note that $id\in J$ by Lemma \ref{beta} and $rank(\theta)<n$.
So $\beta_1\in \langle J \cup \{\theta\}\rangle$ and we  observe that $\beta_1=\alpha' \theta \rho$ with $\alpha' \in \langle J \cup \{\theta\} \rangle$ and $\rho\in J$. Since $y_3^{[t]}\neq k$, we can conclude that $\rho=\beta_1$, a contradiction to $\beta_1\notin J$.  Dually, we can prove that  $\beta_2 \notin J$ is not possible. Therefore, if $y_1\sim z_1$ then $t\neq s$.  \\ 
\indent \quad Furthermore, for all     $\beta\in I_{P^*} $ with $dom(\beta)=y_4^{[t]}$ and $im(\beta)=z_4^{[s]}$ for some $y_4,z_4\in P^*$ such that $y_4\sim y_1$ and  $z_4\sim z_1$, we have $\beta\notin J$. Otherwise, $\theta_1\beta\theta'=\lambda$, where $dom(\theta_1)=y_3, im(\theta_1)=y_4, dom(\theta')=z_4$, and $im(\theta')=z_3$, i.e. $\theta_1, \theta'\in J$ by Lemma \ref{beta}, a contradiction to $ \lambda\notin J$. \\
 
 2. $dom(\lambda)=y_3^{[t]}$  for some $t\in\{1,2,...,|y_3|\}$ with $y_3\in P^*$ and there are $y_1,y_2 \in P^*$ with $y_3\sim y_1\sim y_2\neq y_1$ and $im(\lambda)\not\sqsubset z$ for all $z\in P^*$ (or $im(\lambda)\sqsubset y\in P^*$ with $y\nsim z$ for all $z\in P^*\backslash \{y\}$).  \\ \indent Then for all     $\beta\in I_{P^*}$ with $dom(\beta)=y_4^{[t]}$ for some $y_4\in P^*$ with $y_4 \sim y_1$ and $im(\beta)=im(\lambda)$, we have $\beta\notin J$. Otherwise, $\theta\beta=\lambda$, where $dom(\theta)=y_3$ and $im(\theta)=y_4$ for some $\theta\in J$ by Lemma \ref{beta}, a  contradiction to $ \lambda\notin J$. \\
 
 3. $dom(\lambda)\not \sqsubset y$  for all $y\in P^*$ (or $dom(\lambda)\sqsubset z\in P^*$ with $z\nsim y$ for all $y\in P^*\backslash \{z\}$) and $im(\lambda)=z_3^{[s]}$ for some $s\in\{1,2,...|z_3|\}$,  $z_3\in P^*$, and there are $z_1,z_2\in P^*$ with $z_3\sim z_1\sim z_2\neq z_1$. \\ \indent Then for all     $\beta\in I_{P^*} $with $dom(\beta)=dom(\lambda)$ and $im(\beta)=z_4^{[s]}$ for some $z_4\in P^*, z_4\sim z_1$, we have $\beta\notin J$. Otherwise, $\beta\theta'=\lambda$ by Lemma \ref{beta}, where $dom(\theta')=z_4$ and $im(\theta')=z_3$ for some $\theta'\in J$, a contradiction to $ \lambda\notin J$. \\
 
 4. $dom(\lambda) \not\sqsubset y\in P^*$ for all $y\in P^*$
  (or $dom(\lambda)\sqsubset z\in P^*$ with $z\nsim y$ for all $y\in P^*\backslash \{z\}$) and $im(\lambda)\not\sqsubset y$ for all $y\in P^*$ (or $im(\lambda)\sqsubset z\in P^*$ with $z\nsim y$ for all $y\in P^*\backslash \{z\}$). \\

 \noindent Note that: $B_{y_3}, B_{z_3}\in B$ with $dom(\alpha)\in B_{y_3}$ and $im(\alpha)\in B_{z_3}$, if $\alpha=\lambda$ is of  form 1; \\
  $B_{y_3}\in B$ with $dom(\alpha)\in B_{y_3}$ and $\{im(\alpha)\}\in C$, if $\alpha=\lambda$ is of form 2;  \\
 $B_{z_3}\in B$ with $im(\alpha)\in B_{z_3}$ and $\{dom(\alpha)\}\in C$, if $\alpha=\lambda$ is of  form 3; \\
 $\{dom(\alpha)\}, \{im(\alpha)\}\in C$, if $\alpha=\lambda$ is of form  4. \\
Hence, there is $\Delta_1\in B\cup C$ with $dom(\alpha)\in \Delta_1$. We define, 
\begin{center} $\tilde{B}\tilde{C}=\{\Delta\in BC_{\Delta_1} : \text{there is} \ \beta\in I_{P^*}\backslash J \ \text{with} \ dom(\beta)\in \Delta\}$; \end{center} \begin{center}
 $\hat{B}\hat{C}=\{\Delta\in BC_{\Delta_1} : \text{there is} \ \beta\in I_{P^*}\backslash J \ \text{with} \ im(\beta)\in \Delta\}$; \end{center} \begin{center}
  $T_{BC}=\{ c \in I_{P^*} :  dom(c)\in \tilde{\Delta},im(c)\in \hat{\Delta} \ \text{with} \ \tilde{\Delta} \in \tilde{B}\tilde{C},  \hat{\Delta}\in \hat{B}\hat{C}\}$.  \end{center}
\indent Assume there are $\theta_1,\theta_2\notin J$ with $\theta_1\neq \theta_2$ and $im(\theta_1), dom(\theta_2)\in \Delta$ for some $\Delta\in BC_{\Delta_1}$. So, we observe that $\gamma\theta_1\rho=\theta_2$ with $\gamma\in \langle J \cup \{\theta_1\} \rangle$ and $ \rho\in J$. Then we have $im(\theta_2)\subset im(\rho)$ or $im(\theta_2)=im(\rho)$. Because of  $im(\theta_1), dom(\theta_2)\in \Delta$ and 1-3, then $im(\theta_2)=im(\rho)$. This gives $dom(\rho)=im(\theta_1)$. Then $\rho\notin J$ because of  2, a contradiction. Thus, $ \tilde{B}\tilde{C} \cap \hat{B}\hat{C}= \emptyset$. \\
 \indent Clearly, by the definition of $\tilde{B}\tilde{C}$ and $\hat{B}\hat{C}$, we have  $\tilde{B}\tilde{C}\ \cup \hat{B}\hat{C} \subseteq BC_{\Delta_1}$.  Let $\Delta\in BC_{\Delta_1}$. Further, let $h\in \Delta$, i.e. $h\sim dom(\alpha)$, and let $\gamma_1,\gamma_2 \in I_{P^*}$  with  $h=im(\gamma_1)=dom(\gamma_2), dom(\gamma_1)=dom(\alpha)$, and $im(\gamma_2)=im(\alpha)$. We have that $\gamma_1\gamma_2=\alpha$. This gives $\gamma_1\notin J$ or $\gamma_2\notin J$. So by the definition of $\hat{B}\hat{C}$($ \tilde{B}\tilde{C}$), we see that  if  $\gamma_1\notin J$( $\gamma_2\notin J$), then $\Delta\in \hat{B}\hat{C}$($\Delta\in \tilde{B}\tilde{C})$.  This means $\tilde{B}\tilde{C}\ \cup \hat{B}\hat{C} \supseteq BC_{\Delta_1}$ and together with $\tilde{B}\tilde{C}\ \cup \hat{B}\hat{C} \subseteq BC_{\Delta_1}$, we obtain $\tilde{B}\tilde{C}\ \cup \hat{B}\hat{C} = BC_{\Delta_1}$.   Consequently, $BC=(\tilde{B}\tilde{C}, \hat{B}\hat{C})$ is a partition of $BC_{\Delta_1}$. \\
\indent Let $\gamma \in T_{BC}$. Then there are $\Delta\in \tilde{B}\tilde{C}$ and $\Delta' \in \hat{B}\hat{C}$ such that $dom(\gamma)\in \Delta$ and $im(\gamma)\in \Delta'$. By the definition of   $\tilde{B}\tilde{C}$ and $\hat{B}\hat{C}$, there are $\delta_1,\delta_2\in I_{P^*}\backslash J$ with $dom(\delta_1)\in \Delta$ and $im(\delta_2)\in \Delta'$.  If $\gamma=\delta_1$ or $\gamma=\delta_2$ then we have $\gamma\notin J$.  Suppose $\gamma\neq\delta_1$ and $\gamma\neq\delta_2$. Recall, we have $dom(\gamma), dom(\delta_1)\in \Delta$. By 1-4, we get that there is $\theta_1\notin J$ with $dom(\theta_1)=dom(\gamma)$.  There is $\gamma'\in I_{P^*}$ with $dom(\gamma')=im(\gamma)$ and $im(\gamma')=im(\theta_1)$. We observe that $\gamma\gamma'=\theta_1$. This means, $\gamma\notin J$ or $\gamma'\notin J$.  Assume that $\gamma'\notin J$. We have  $dom(\gamma')=im(\gamma)\in \Delta'$. Then $\Delta'\in \tilde{B}\tilde{C}$ and we have $ \tilde{B}\tilde{C} \cap \hat{B}\hat{C}\neq \emptyset$, a contradiction.  Thus, $\gamma\notin J$. We can conclude that $T_{BC}\cap J = \emptyset$, this means $J\subseteq I_{P^*}\backslash T_{BC}$ and by Lemma \ref{bc}, we have $I_{P^*}\backslash T_{BC}$ is a semigroup. Together with $J$ is maximal subsemigroup of $I_{P^*}$, we have  $J=I_{P^*}\backslash T_{BC}$. \\
 \indent  Assume there is $\theta\in T_{BC}$ with $\theta\notin I_{P^*}^u$.  By Corollary \ref{>}, there are $a_1,a_2\in I_{P^*}$ with $rank(a_1),rank(a_2)>rank(a_1a_2)$ such that $\theta =a_1a_2$. This provides, $a_1,a_2\in J$ by Lemma \ref{beta}, i.e. $\theta\in J$, a contradiction to $T_{BC}\cap J = \emptyset$. Thus,  $T_{BC}\subseteq I_{P^*}^u$. \\

 Conversely, let $J=I_{P^*}\backslash T_{\{g\}}$ with  $|BC_{\{g\}}|=1$  for some $g\in P^*$ such that $\{g\}\in C$ and $T_{\{g\}}\subseteq I_{P^*}^u$. Then $I_{P^*}\backslash T_{\{g\}}$ is a semigroup by Lemma \ref{alp}. Since $| T_{\{g\}} |=1$, we can conclude that $J$ is a maximal subsemigroup of $I_{P^*}$. \\
 \indent Let $J=I_{P^*}\backslash T_{B_{y_1}}$ with $|BC_{B_{y_1}}|=1$ for some $y_1\in P^*$ such that $|y_1|\geq 2$, $B_{y_1}\in B $, and $T_{B_{y_1}}\subseteq I_{P^*}^u$. Then $I_{P^*}\backslash T_{B_{y_1}}$ is a semigroup by Lemma \ref{yyy}. Moreover, we can conclude that $\{B_{y_1}\}=BC_{B_{y_1}}$. 
Let $\alpha,\beta\in T_{B_{y_1}}$. There are $y_3,y_4,y_5,y_6\in A_{y_1}$ and some $t\in\{1,2,...,|y_1|\}$ such that $dom(\alpha)=y_3^{[t]}, im(\alpha)=y_4^{[t]}, dom(\beta)=y_5^{[t]}$, and $im(\beta)=y_6^{[t]}$. Further, there are $\theta_1,\theta_2\in I_{P^*}$ with $dom(\theta_1)=y_5, im(\theta_1)=y_3, dom(\theta_2)= y_4$, and $im(\theta_2)=y_6$. This shows $\theta_1,\theta_2\in J$ because of $rank(\theta_1)=rank(\theta_2)>rank(\alpha)$. So, we have $\theta_1\alpha\theta_2=\beta$. Thus, we get $\beta\in \langle J \cup \{\alpha\} \rangle$. Consequently, $J$ is a maximal subsemigroup of $I_{P^*}$. \\
\indent Let $J=I_{P^*}\backslash T_{Q}$ for some partition  $Q=( Q_1, Q_2 )$ of $BC_{\Delta_1}$, where $\Delta_1\in B \cup C$ and $T_Q\subseteq I_{P^*}^u$.  We have that $I_{P^*}\backslash T_{Q}$ is a semigroup by Lemma \ref{bc}.   Let $\alpha, \beta\in T_{Q}$. Then $dom(\alpha)\in \Delta_2, im(\alpha)\in \Delta_3, dom(\beta)\in \Delta_4$, and  $im(\beta)\in \Delta_5$, where $\Delta_2, \Delta_4\in Q_1$ and $\Delta_3,\Delta_5\in Q_2$. There are $\theta_1, \theta_2\in I_{P^*}$ with $dom(\theta_1)=dom(\beta), im(\theta_1)=dom(\alpha)$, $dom(\theta_2)=im(\alpha)$, and $im(\theta_2)=im(\beta)$. We get that $\theta_1, \theta_2\in J$ because of $im(\theta_1)=dom(\alpha)\in \Delta_2\in Q_1$ and $dom(\theta_2)=im(\alpha)\in \Delta_3\in Q_2$. So, we have $\theta_1\alpha\theta_2=\beta$. Thus, we get  $\beta\in \langle J \cup \{\alpha\} \rangle$.   Consequently, we have  that    $J$ is a maximal subsemigroup of $I_{P^*}$. 
 \end{proof}

\bibliography{sn-bibliography}

\end{document}